\documentclass{amsart}
\usepackage {amsmath, amscd}
\usepackage {amssymb}
\usepackage{hyperref}

\usepackage{mathpazo}


\textwidth15.5cm   
\textheight22.5cm
\advance\voffset by -1.5cm
\advance\hoffset by -1.5cm

\linespread{1.35}
\def\<{\langle}
\def\>{\rangle}

\numberwithin{equation}{section}

\def\AA{{\mathcal A}}

\def\DD{{\mathcal D}}

\def\FF{{\mathcal F}}
\def\GG{{\mathcal G}}
\def\HH{{\mathcal H}}

\def\LL{{\mathcal L}}

\def\bbR{\mathbb{R}}
\def\bbC{\mathbb{C}}

\def\bbN{\mathbb{N}}

\def\bbH{\mathbb{H}}
\def\bbE{\mathbb{E}}

\newcommand{\tr}{\mathop{\rm Tr}}

\newcommand{\diag}{\mathop{\rm diag}}

\newtheorem{lemma}{Lemma}[section]
\newtheorem{proposition}[lemma]{Proposition}
\newtheorem{theorem}[lemma]{Theorem}
\newtheorem{corollary}[lemma]{Corollary}

\newtheorem*{KS}{Kadison--Singer Conjecture (KS)}
\newtheorem*{PC}{Paving Conjecture (PC)}

\theoremstyle{definition}
\newtheorem{remark}[lemma]{Remark}
\newtheorem{definition}[lemma]{Definition}
\newtheorem{example}[lemma]{Example}
\newtheorem{exer}[lemma]{Exercise}

\let\oldtocsection=\tocsection

\let\oldtocsubsection=\tocsubsection

\renewcommand{\tocsection}[2]{\hspace{0em}\oldtocsection{#1}{#2}}
\renewcommand{\tocsubsection}[2]{\hspace{1em}\oldtocsubsection{#1}{#2}}
{\parindent=1.5cm\bigskip\begin{minipage}{13cm}\begin{exer}\begin{small}}%
{\end{small}\end{exer}\end{minipage}\bigskip}

\title[The solution to the Kadison--Singer Problem]{The solution to the Kadison--Singer Problem: yet another presentation}

\author{Dan Timotin}

\begin{document}

\begin{abstract}
	In the summer of 2013 Marcus, Spielman, and Srivastava gave a surprising and beautiful solution to the Kadison--Singer problem. The current presentation  is slightly more didactical than other versions that have appeared since; it hopes to contribute to a thorough understanding of this amazing proof.
\end{abstract}

\maketitle

\tableofcontents

\section{Introduction}

The Kadison--Singer Problem has been posed in~\cite{KS} in the fifties, probably in relation to a statement of Dirac concerning the foundations of quantum mechanics. It has soon acquired a life of its own. On one hand, there have been several notable attempts to prove it. On the other hand, it has been shown that it is equivalent to various problems in Hilbert space theory, frame theory, geometry of Banach spaces, etc. However, for five decades the problem has remained unsolved.

It is therefore very remarkable that in 2013 a proof has been given by Marcus, Spelman and Srivastava in~\cite{MSS}. The methods used were rather unexpected; moreover, they had shown their strength in some totally unrelated areas (Ramanujan graphs). They  also have a very elementary flavour: most of the proof is based on a delicate analysis of the behavior of polynomials in one or several variables.

In the year and a half that has passed a better grasp of the proof has been achieved, most notably through Terence Tao's entry in his blog~\cite{T} (but see also~\cite{MSS3}). It still remains an astonishing piece of research, obtaining  spectacular results on a long standing conjecture through some not very complicated and apparently unrelated arguments.

The purpose of these notes is to contribute towards a better understanding of the MSS proof. There is of course no pretense to any originality: the content is essentially in~\cite{MSS}, with some supplementary simplification due to~\cite{T} (and occasionally to~\cite{V}). But we have tried to make it more easy to follow by separating clearly the different steps and emphasizing the main arguments; also, in various places we have gone into more details than in the other presentations. It is to be expected that the methods of~\cite{MSS}  might lead to new fruitful applications, and so it seemed worth to analyze them in detail.

It is clear from the above that the notes concentrate on the MMS proof, so there will be very little about the Kadison--Singer problem itself and about the plethora of research that had evolved in the last fifty years on its relations to other domains. In particular, with one exception that we need to use (the paving conjecture), we will not discuss the different reformulations and equivalent statements that have been obtained. For all these matters, one may consult former beautiful presentations, as for instance~\cite{Ca}. 

We will give in the next section a brief presentation of the original problem, as well as of another assertion, the paving conjecture, which has been shown soon afterwards to imply it. The description of the remaining part of the paper is postponed to Section~\ref{se:intermezzo}, where the reader will have a general overview of the development of the proof.

These notes have been written for a series of lectures given in December 2014 at the Indian Statistical Institute in Bangalore, in the framework of the meeting \emph{Recent Advances in Operator Theory and Operator Algebras}. We thank B.V.R. Bath, J. Sarkar, and V.S. Sunder for the excellent work done in organizing the workshop and the conference, as well as for the invitation to present the lectures. 

\section{The Kadison--Singer problem}\label{se:KSproblem}

\subsection{Pure states}

The material in this subsection is contained in standard books on $C^*$
-algebras (see, for instance,~\cite{KR}).

We  denote by $B(\HH)$ the algebra of all bounded linear operators on the Hilbert space $\HH$.
A \emph{$C^*$-algebra} $\AA\subset B(\HH)$ is a norm closed subalgebra of $B(\HH)$, closed to the operation of taking the adjoint, and containing the identity.

A \emph{state} on a $C^*$-algebra $\AA$ is a linear continuous map $\phi:\AA\to\bbC$, which is positive (meaning that $\phi(a^*a)\ge 0$ for any $a\in\AA$), and such that $\phi(I)=1$. One proves then that $\|\phi\|=1$ and that $\phi$ satisfies the \emph{Cauchy--Schwarz} inequality
\begin{equation}\label{eq:Cauchy-Schwarz for states}
|\phi(b^*a)|^2 \le \phi(a^*a)\phi(b^*b)
\end{equation}
for all $a,b\in \AA$.

The set $\mathfrak{S}(\AA)$ of all states on $\AA$ is a convex, $w^*$-compact subset of $\AA^*$. A state $\phi$ is called \emph{pure} if it is an extreme point of $\mathfrak{S}(\AA)$.

\begin{example}\label{ex:commutative}
	If $\AA$ is commutative, then by Gelfand's Theorem it is isomorphic to $C(X)$, the algebra of continuous functions on the compact space $X$ of all \emph{characters} (multiplicative homomorphisms) $\chi:\AA\to \bbC$. The dual $C(X)^*$ is formed by all Borel measures on $X$, and $\mathfrak{S}(C(X))$ is the set of probability measures on $X$. Pure states are precisely Dirac measures. In particular (and this is a fact that we will use below) a pure state on a commutative $C^*$-algebra is multiplicative.
	\end{example}

	\begin{example}\label{ex:pure states on B(H)}
		 If $\AA=B(\HH)$, $\xi\in \HH$, and $\|\xi\|=1$, then one can prove that $\phi_\xi(T):=\< T\xi, \xi\>$ is a pure state. This fact will not be used in the sequel.
	\end{example}

By a theorem of Krein (see, for instance,~\cite[Ch.I.10]{Na}) any state $\phi$ on a $C^*$-algebra $\AA$ extends to a state $\tilde{\phi}$ on $B(\HH)$. The set $K_\phi$ of all extensions of $\phi$ is a convex $w^*$-compact subset of $B(\HH)^*$.

\begin{lemma}\label{le:pure state extension}
	If $\phi$ is a pure state on $\AA\subset B(\HH)$, then the extreme points of $K_\phi$ are pure states of $B(\HH)$.
\end{lemma}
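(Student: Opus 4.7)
The plan is to reduce ``extreme in $\mathfrak{S}(B(\HH))$'' to ``extreme in $K_\phi$'' by exploiting the purity of $\phi$. Concretely, let $\psi$ be an extreme point of $K_\phi$. Suppose, for contradiction (or rather, to verify the extreme point property directly), that $\psi = t\psi_1 + (1-t)\psi_2$ with $0<t<1$ and $\psi_1,\psi_2 \in \mathfrak{S}(B(\HH))$. The goal is to conclude $\psi_1 = \psi_2 = \psi$.

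First I would observe that the restriction map $\rho : \mathfrak{S}(B(\HH)) \to \mathfrak{S}(\AA)$, $\psi' \mapsto \psi'|_\AA$, is well defined (the restriction of a positive, unital, continuous functional to the unital $C^*$-subalgebra $\AA$ is clearly a state on $\AA$) and affine. Applying $\rho$ to the convex decomposition above yields
\[
\phi = \psi|_\AA = t\,\psi_1|_\AA + (1-t)\,\psi_2|_\AA,
\]
a convex combination of states on $\AA$. Since $\phi$ is a pure state on $\AA$, i.e.\ an extreme point of $\mathfrak{S}(\AA)$, this forces $\psi_1|_\AA = \psi_2|_\AA = \phi$; in other words, $\psi_1,\psi_2 \in K_\phi$.

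Now I would invoke the hypothesis that $\psi$ is extreme in $K_\phi$: the equality $\psi = t\psi_1 + (1-t)\psi_2$ with $\psi_1,\psi_2 \in K_\phi$ and $0<t<1$ forces $\psi_1 = \psi_2 = \psi$. This shows $\psi$ admits no nontrivial convex decomposition inside $\mathfrak{S}(B(\HH))$, so $\psi$ is a pure state of $B(\HH)$, as claimed.

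There is no real obstacle here beyond keeping the two convex sets $K_\phi \subset \mathfrak{S}(B(\HH))$ straight; the argument is a one-line affine-restriction trick. The only point worth being careful about is verifying that the restriction of an arbitrary state on $B(\HH)$ to $\AA$ is again a state (this uses that $\AA$ is assumed to contain the identity, which is part of the definition of $C^*$-algebra given earlier).
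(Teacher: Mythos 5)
Your argument is exactly the paper's: restrict the convex decomposition to $\AA$, use purity of $\phi$ to force both restrictions to equal $\phi$ (hence both components lie in $K_\phi$), then invoke extremality of $\psi$ within $K_\phi$. The only cosmetic difference is that the paper takes $t=\tfrac12$ without loss of generality while you keep a general $t\in(0,1)$; both are correct.
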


\begin{proof}
	Suppose $\phi$ is an extreme point of $K_\phi$. If $\tilde{\phi}=\frac{1}{2}(\psi_1+\psi_2)$, with $\psi_1,\psi_2$ states on $B(\HH)$, then $\phi=\frac{1}{2}(\psi_1|\AA+\psi_2|\AA)$. Since $\phi$ is pure, we must have $\psi_1|\AA=\psi_2|\AA=\phi$, so $\psi_1, \psi_2\in K_\phi$, and therefore $\psi_1= \psi_2=\phi$.
\end{proof}

Consequently, a pure state $\phi$ on $\AA$  has a unique extension to a state on $B(\HH)$ if and only if it has a unique \emph{pure} extension to a state  on $B(\HH)$.

\subsection{The Kadison--Singer conjecture}

From now on we will suppose that the Hilbert space $\HH$ is $\ell^2=\ell^2(\bbN)$ and we will consider matrix representations of operators on $B(\ell^2)$ with respect to the usual canonical basis of $\ell^2$. We define $\DD$ to be the $C^*$-algebra of  operators on $\ell^2$ whose matrix is diagonal.  Note that the map $\diag:B(\ell^2)\to \DD$ which sends an operator $T$ to the diagonal operator having the same diagonal entries is continuous, positive, of norm~1.

We may now state the \emph{Kadison--Singer Problem:}

\begin{quote}
	\emph{Does any pure state on $\DD$ extend uniquely to a  state on $B(\ell^2)$?}
\end{quote}

Although Kadison and Singer originally thought a negative answer to this question as more probable, in view of its eventual positive answer we will state the conjecture in the affirmative form.

\begin{KS}
	Any pure state on $\DD$ extends uniquely to a  state on $B(\ell^2)$.
\end{KS}

The first thing to note is that any state $\phi\in \mathfrak{S}(\DD)$ has a ``canonical''
 extension to $\mathfrak{S}(B(\ell^2))$, given by
 \begin{equation}\label{eq:canonical tilde phi}
 \tilde{\phi} (T)=\phi(\diag(T)).
 \end{equation}
 So the problem becomes whether $\tilde{\phi}$ is or not the unique extension of $\phi$ to $B(\ell^2)$. If $\psi$ is another extension of $\phi$ and $T\in B(\ell^2)$, then 
 \[
 \psi(T-\diag T)= \psi(T)-\phi(\diag T)= \psi(T)-\tilde{\phi}(T).
 \]
 So $\psi=\tilde{\phi}$ if and only if $\psi(T-\diag T)=0$ for any $T\in B(\ell^2)$, which is equivalent to say that $\psi(T)=0$ for any $T\in B(\ell^2)$ with $\diag T=0$. As a consequence, we have the following simple lemma:
 
 \begin{lemma}\label{le:simple equivalence}
 	(KS) is true if and only if  any extension $\psi\in \mathfrak{S}(B(\ell^2))$ of a pure state on $\AA$ satisfies 
 	\[
 	\diag T=0\implies\psi(T)=0.
 	\]
 \end{lemma}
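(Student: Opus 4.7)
The plan is essentially to package the computation carried out in the paragraph that immediately precedes the lemma, observing that it already provides a biconditional characterization. Recall that for any state $\phi\in\mathfrak{S}(\DD)$, the canonical extension $\tilde{\phi}$ defined by $\tilde{\phi}(T)=\phi(\diag T)$ is a genuine state on $B(\ell^2)$ (because $\diag$ is positive, unital, of norm one), so $\tilde{\phi}\in K_\phi$ always. Consequently (KS) just asserts that for every pure $\phi$ on $\DD$ and every $\psi\in K_\phi$, we have $\psi=\tilde{\phi}$.

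For the forward direction, I would assume (KS), pick a pure state $\phi$ on $\DD$ and any $\psi\in K_\phi$, and use the identity
\[
\psi(T)-\tilde{\phi}(T)=\psi(T-\diag T)
\]
displayed before the lemma. Since $\psi=\tilde{\phi}$ by hypothesis, the right-hand side vanishes for every $T$; specializing to $T$ with $\diag T=0$ yields $\psi(T)=0$, which is the implication required.

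For the converse, I would assume the stated implication and again consider an arbitrary extension $\psi$ of an arbitrary pure state $\phi$. For any $T\in B(\ell^2)$ the operator $T-\diag T$ has zero diagonal, so by hypothesis $\psi(T-\diag T)=0$; by the same displayed identity this gives $\psi(T)=\tilde{\phi}(T)$ for every $T$, i.e.\ $\psi=\tilde{\phi}$. Since $\phi$ and $\psi$ were arbitrary, this is exactly (KS).

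There is no real obstacle here: the content of the lemma is precisely the observation that the set of $T$ with $\diag T=0$ is a linear complement to $\DD$ inside $B(\ell^2)$ on which $\tilde{\phi}$ vanishes, so uniqueness of extension reduces to vanishing on that complement. The only thing to be careful about is to state clearly that the equivalence quantifies over all pure states $\phi$ simultaneously (so that the ``any extension'' clause in the lemma matches the ``any pure state has a unique extension'' clause in (KS)); this is why the argument above is written with $\phi$ and $\psi$ both arbitrary in each direction.
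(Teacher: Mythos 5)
Your proof is correct and follows exactly the same route as the paper, which indeed just treats the computation in the paragraph preceding the lemma as its proof. You have merely made explicit the two directions of the biconditional, both of which hinge on the identity $\psi(T)-\tilde{\phi}(T)=\psi(T-\diag T)$, so nothing further is needed.
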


In fact, pure states of $\DD$ can be described more precisely. Indeed, being a commutative algebra, $\DD$ is isomorphic to $C(X)$ (as noted in Example~\ref{ex:commutative}). One can  identify $X$ precisely: it is $\beta \bbN$, the Stone-Cech compactification of $\bbN$. We do not need this fact, but will use only a simple observation.

\begin{lemma}\label{le:pure state on projection}
	If $\phi$ is a pure state on $\DD$ and $P\in\DD$ is a projection, then $\phi(P)$ is either 0 or 1.
\end{lemma}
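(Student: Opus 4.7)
The plan is to exploit commutativity of $\DD$ and reduce the statement to the algebraic identity $x = x^2$ satisfied by idempotents.

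First I would invoke Example~\ref{ex:commutative}: since $\DD$ is commutative, its pure states are precisely the multiplicative linear functionals (characters). This is the essential input; it was stated in the example as a fact, and without it one would need a longer argument through Gelfand representation.

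Then, given a projection $P \in \DD$, we have the algebraic identity $P^2 = P$. Applying the multiplicative pure state $\phi$ gives
\[
\phi(P) = \phi(P^2) = \phi(P)\phi(P) = \phi(P)^2,
\]
so $\phi(P)$ is a complex number satisfying $z = z^2$, hence $\phi(P) \in \{0,1\}$.

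I expect no real obstacle here, provided one accepts the ``multiplicativity of pure states on commutative $C^*$-algebras'' observation of Example~\ref{ex:commutative}. If one wished to avoid that, an alternative route would be to use Cauchy--Schwarz~\eqref{eq:Cauchy-Schwarz for states} with $a = P$, $b = I - P$, noting $PI = P$ and $P(I-P)=0$, to deduce $\phi(P)\phi(I-P) = 0$ directly, again forcing $\phi(P) \in \{0,1\}$; but the multiplicative argument is cleaner and aligns with what has been set up in the excerpt.
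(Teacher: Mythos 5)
Your main argument is exactly the paper's proof: invoke the multiplicativity of pure states on the commutative algebra $\DD$ (Example~\ref{ex:commutative}) and apply it to $P = P^2$ to get $\phi(P) = \phi(P)^2$, hence $\phi(P) \in \{0,1\}$. That is correct and complete. One small caveat on your side remark: the Cauchy--Schwarz alternative as sketched does not quite close, since with $a = P$, $b = I-P$ the inequality~\eqref{eq:Cauchy-Schwarz for states} only gives $0 = |\phi((I-P)P)|^2 \le \phi(P)\phi(I-P)$, which is the wrong direction and does not force $\phi(P)\phi(I-P) = 0$; extracting $\phi(P)\in\{0,1\}$ from purity without the multiplicativity shortcut genuinely requires more (e.g.\ decomposing $\phi$ as a convex combination of the states $x \mapsto \phi(Px)/\phi(P)$ and $x\mapsto \phi((I-P)x)/\phi(I-P)$ and invoking extremality).
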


\begin{proof}
	It has been noted above (see Example~\ref{ex:commutative}) that $\phi$ is multiplicative. Then $\phi(P)=\phi(P^2)=\phi(P)^2$, whence $\phi(P)$ is either 0 or 1.
\end{proof}

\begin{remark}\label{re:dirac}
	As hinted in the introduction, although in the original paper~\cite{KS} there is no mention of quantum mechanics, in subsequent papers the authors state as source for the problem the work of Dirac on the foundation of quantum mechanics~\cite{Di}. For some comments on this, see Subsection~\ref{se:final remarks}.1 below.
\end{remark}

\subsection{The Paving Conjecture}\label{sse:PC}

Instead of dealing directly with the Kadison--Singer conjecture, we intend to prove a statement about finite dimensional matrices, which is usually known as Anderson's \emph{paving conjecture}~\cite{A}. We use the notation $\DD_m$ to indicate diagonal $m\times m$ matrices and $\diag_m$ the corresponding map from $M_m(\bbC)$ to $\DD_m$. 

\begin{PC}
	For any $\epsilon>0$ there exists  $r\in\bbN$ such that the following is true: 
	
	For any $m\in \bbN$ and $T\in B(\bbC^m) $ with $\diag_m T=0$,  there exist projections $Q_1, \dots, Q_r\in \DD_m$, with $\sum_{i=1}^{r} Q_i=I_m$, and
	\[
	\|Q_i TQ_i\|\le \epsilon \|T\|
	\] 
	for all $i=1, \dots, r$.
\end{PC}

A diagonal projection $Q\in \DD_m$ has its entries 1 or 0, so it is defined by a subset $S\subset \{1, \dots, m \}$. Thus diagonal projections $Q_1, \dots Q_r\in \DD_m$ with $\sum_{i=1}^{r} Q_i=I_m$ correspond to partitions $\{1, \dots, m \}=S_1\cup\cdots\cup S_r$, $S_i\cap S_j=\emptyset$ for $i\not=j$.

It is important that in the statement of (PC) the number $r$ does not depend on $m$. This allows us to deduce from (PC)  a similar statement, in which $\bbC^m$ is replaced with the whole $\ell^2$, is also true. We formulate this as a lemma.

\begin{lemma}\label{le:PC implies PC infinity}
	If (PC) is true, then for any $\epsilon>0$ there exists $r\in \bbN$ such that, for any $T\in B(\ell^2)$ with $\diag T=0$ one can find projections $Q_1, \dots, Q_r\in \DD$, with $\sum_{i=1}^{r} Q_i=I$, and
	\[
	\|Q_i TQ_i\|\le \epsilon \|T\|
	\] 
	for all $i=1, \dots, r$.
\end{lemma}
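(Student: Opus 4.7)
The plan is to deduce the $\ell^2$ statement from (PC) by a compactness argument applied to the finite principal submatrices of $T$. The key point is that (PC) furnishes, for a given $\epsilon>0$, the same $r$ at every truncation level; fix such an $r$ once and for all, and let $T\in B(\ell^2)$ be arbitrary with $\diag T=0$.

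First I would truncate. Let $P_m\in B(\ell^2)$ denote the orthogonal projection onto the span of $e_1,\dots,e_m$, and set $T_m:=P_mTP_m$, viewed as an element of $B(\bbC^m)$. Then $\diag_m T_m=0$ and $\|T_m\|\le\|T\|$, so applying (PC) to $T_m$ yields for each $m$ a partition $\{1,\dots,m\}=S_1^{(m)}\cup\cdots\cup S_r^{(m)}$ with corresponding diagonal projections $Q_i^{(m)}\in \DD_m$ satisfying $\|Q_i^{(m)}T_m Q_i^{(m)}\|\le\epsilon\|T\|$.

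Next I would extract a limit partition. Encode the $m$-th partition as $f_m:\bbN\to\{1,\dots,r\}$ by $f_m(n)=i$ whenever $n\in S_i^{(m)}$ and (say) $f_m(n)=1$ for $n>m$. The space $\{1,\dots,r\}^{\bbN}$ is compact in the product topology (Tychonoff), so a subsequence $(f_{m_k})$ converges pointwise to some $f:\bbN\to\{1,\dots,r\}$. Setting $S_i:=f^{-1}(i)$ gives a partition of $\bbN$ and associated diagonal projections $Q_i\in \DD$ with $\sum_{i=1}^{r}Q_i=I$.

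Finally I would verify that $\|Q_iTQ_i\|\le\epsilon\|T\|$. Take finitely supported unit vectors $\xi,\eta\in\ell^2$ with supports in a finite set $F\subset\bbN$. For $k$ sufficiently large, $F\subset\{1,\dots,m_k\}$ and $f_{m_k}|_F=f|_F$; hence $Q_i\xi=Q_i^{(m_k)}\xi$ and $Q_i\eta=Q_i^{(m_k)}\eta$. A routine calculation, using $P_{m_k}\xi=\xi$, $P_{m_k}\eta=\eta$, and self-adjointness of the diagonal projections, yields
\[
\<Q_iTQ_i\xi,\eta\>=\<Q_i^{(m_k)}T_{m_k}Q_i^{(m_k)}\xi,\eta\>,
\]
so $|\<Q_iTQ_i\xi,\eta\>|\le\epsilon\|T\|$. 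Density of finitely supported unit vectors in the unit sphere of $\ell^2$ then gives the operator-norm bound. The main point of substance is the uniformity of $r$ built into (PC); the rest is standard compactness together with a bookkeeping check on matrix entries.
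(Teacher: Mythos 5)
Your proof is correct and takes essentially the same approach as the paper: truncate $T$ to finite principal submatrices, apply (PC) with the uniform $r$, extract a limit partition by compactness, and verify the norm bound against finitely supported vectors. The only cosmetic difference is that you encode each whole partition as a point of $\{1,\dots,r\}^{\bbN}$ and invoke Tychonoff, whereas the paper encodes each projection separately in $\{0,1\}^{\bbN}$ and uses a diagonal subsequence argument; your packaging makes the fact that the limit is still a partition automatic.
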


 \begin{proof}
 	Embed $\bbC^m$ canonically into $\ell^2$ on the first $m$ coordinates and denote by $E_m$ the corresponding orthogonal projection. For $T\in B(\ell^2)$ denote $T_m=E_mTE_m$. Applying (PC), one finds diagonal projections $Q^{(m)}_1, \dots, Q^{(m)}_r$, such that $\sum_{i=1}^{r} Q^{(m)}_i=I_m$ and $\|Q^{(m)}_iT_mQ^{(m)}_i\| \le \epsilon\|T_m\|$.
 	
 	Now, diagonal projections in $B(\ell^2)$ can be identified with subsets of $\bbN$, and therefore with elements in the compact space $\{ 0, 1\}^\bbN$. In this compact space any sequence has a convergent subsequence; therefore a diagonal argument will produce an increasing subsequence of positive integers $m_k$, such that for each $i=1, \dots, r$ we have $Q^{(m_k)}_i\to Q_i$ for some $Q_i$. We have 
 	\[
 	\sum_{i=1}^{r} Q_i =\lim_{k\to\infty} \sum_{i=1}^{r} Q^{(m_k)}_i= \lim_{k\to\infty} I_{m_k}=I.
 	\]
 	If $\xi, \eta\in \ell^2$ are vectors with finite support, then $\xi, \eta\in \bbC^{d_k}$ for some $k$, and then
 	\[
 	\begin{split}
 		|\< Q_iTQ_i \xi, \eta\>|& = 	|\< TQ_i \xi, Q_i \eta\>|
 		=	|\< T_{m_k}Q^{(m_k)}_i \xi,Q^{(m_k)}_i  \eta\>|
 		= 	|\<Q^{(m_k)}_i  T_{m_k}Q^{(m_k)}_i \xi, \eta\>|\\
& 		\le \| Q^{(m_k)}_i  T_{m_k}Q^{(m_k)}_i\| \|\xi\| \|\eta\|
 		\le \epsilon \|T\|  \|\xi\| \|\eta\|.\qedhere
 	\end{split}
 	\]
 \end{proof}

The Paving Conjecture is actually equivalent to the Kadison--Singer Conjecture, but we will need (and prove) only one of the implications.

\begin{proposition}\label{pr:PC implies KS}
	The Paving Conjecture implies the Kadison--Singer Conjecture.
\end{proposition}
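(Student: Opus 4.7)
The plan is to combine three ingredients: Lemma~\ref{le:simple equivalence} (which reduces everything to showing $\psi(T)=0$ whenever $\diag T=0$), Lemma~\ref{le:PC implies PC infinity} (the infinite-dimensional paving), and Lemma~\ref{le:pure state on projection} (which forces a pure state to be $0$--$1$ valued on diagonal projections). The strategy is to approximate $\psi(T)$ by something supported in a small $(i_0,i_0)$ corner of a paving, whose norm is at most $\epsilon\|T\|$, and then let $\epsilon\to 0$.

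Concretely, let $\phi$ be a pure state on $\DD$, let $\psi$ be any extension of $\phi$ to $B(\ell^2)$, and take $T\in B(\ell^2)$ with $\diag T=0$. Fix $\epsilon>0$ and apply Lemma~\ref{le:PC implies PC infinity} to get diagonal projections $Q_1,\dots,Q_r$ with $\sum_i Q_i=I$ and $\|Q_iTQ_i\|\le\epsilon\|T\|$. By Lemma~\ref{le:pure state on projection} each $\phi(Q_i)\in\{0,1\}$, and since $\sum_i\phi(Q_i)=\phi(I)=1$ there is a unique index $i_0$ with $\phi(Q_{i_0})=1$ and $\phi(Q_j)=0$ for $j\ne i_0$. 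Because $\psi$ extends $\phi$, the same values are attained by $\psi$ on the $Q_j$'s.

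The heart of the argument is to use the Cauchy--Schwarz inequality~\eqref{eq:Cauchy-Schwarz for states} for $\psi$ twice, so that the off-diagonal contributions are annihilated by the factors $\psi(Q_j)=0$. Applied with $a=T$, $b=Q_j$, it gives $|\psi(Q_jT)|^2\le\psi(T^*T)\,\psi(Q_j)$, so $\psi(Q_jT)=0$ for $j\ne i_0$; expanding $T=\sum_i Q_iT$ (a finite sum) then yields $\psi(T)=\psi(Q_{i_0}T)$. A symmetric application with $a=Q_j$, $b=T^*Q_{i_0}$ gives $|\psi(Q_{i_0}TQ_j)|^2\le\psi(Q_j)\,\psi(Q_{i_0}TT^*Q_{i_0})=0$ for $j\ne i_0$, and expanding $Q_{i_0}T=\sum_jQ_{i_0}TQ_j$ produces $\psi(T)=\psi(Q_{i_0}TQ_{i_0})$.

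The proof concludes by combining this identity with $\|\psi\|=1$:
\[
|\psi(T)|=|\psi(Q_{i_0}TQ_{i_0})|\le\|Q_{i_0}TQ_{i_0}\|\le\epsilon\|T\|,
\]
and letting $\epsilon\to 0$ gives $\psi(T)=0$, as required by Lemma~\ref{le:simple equivalence}. There is no real obstacle beyond choosing the correct factorisations in Cauchy--Schwarz so that each off-diagonal piece absorbs a factor $\psi(Q_j)=0$; all the analytic effort has already been concentrated in Lemma~\ref{le:PC implies PC infinity}, while the ``exactly one projection wins'' combinatorics is exactly what Lemma~\ref{le:pure state on projection} provides.
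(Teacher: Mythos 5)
Your argument is correct and follows essentially the same route as the paper: reduce to showing $\psi(T)=0$ via Lemma~\ref{le:simple equivalence}, pave with Lemma~\ref{le:PC implies PC infinity}, single out the one index $i_0$ with $\psi(Q_{i_0})=1$ via Lemma~\ref{le:pure state on projection}, kill the off-block pieces with Cauchy--Schwarz to get $\psi(T)=\psi(Q_{i_0}TQ_{i_0})$, and conclude with the norm bound. The only cosmetic difference is that the paper establishes $\psi(Q_iR)=\psi(RQ_i)=0$ for all $R$ and $i\ne i_0$ at once and then expands $\psi(T)=\sum_{i,j}\psi(Q_iTQ_j)$ in one step, whereas you perform the left and right absorptions as two separate Cauchy--Schwarz applications; the substance is identical.
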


\begin{proof}
	Fix $\epsilon>0$, and suppose that $r$ satisfies the conclusion of Lemma~\ref{le:PC implies PC infinity}. Take a pure state $\psi\in \mathfrak{S}(B(\ell^2))$ and an operator $T\in B(\ell^2)$ with $\diag T=0$. By Lemma~\ref{le:simple equivalence} we have to show that $\psi(T)=0$. 
	
	Let $Q_i$ be the diagonal projections associated to $T$ by Lemma~\ref{le:PC implies PC infinity}. By Lemma~\ref{le:pure state on projection}, $\psi(Q_i)=\phi(Q_i)$ is 0 or 1 for each $i$. Since $1=\phi(I)= \sum_{i=1}^{r}\phi(Q_i)$, it follows that there exists some $i_0$ for which $\phi(Q_{i_0})=1$, while $\phi(Q_i)=0$ for $i\not= i_0$.
	
	If $i\not= i_0$, then~\eqref{eq:Cauchy-Schwarz for states} implies
	\[
	|\psi(Q_i R)|\le \psi(Q_i^* Q_i)\psi (R^*R) = \psi(Q_i)  \psi (R^*R)=0,
	\]
	and similarly $\psi (RQ_i)=0$ for all $R\in B(\ell^2)$. Therefore
	\[
	\psi(T)= \sum_{i=1}^{r}\sum_{j=1}^{r} \psi(Q_i T Q_j)= \psi(Q_{i_0}TQ_{i_0}).
	\] 
	But the  projections $Q_i$ have been chosen such as to have $\|Q_{i_0}T Q_{i_0}\|\le \epsilon\|T\|$, so
	\[
	|\psi(T)|\le \|Q_{i_0}T Q_{i_0}\|\le \epsilon\|T\|.
	\]
	Since this is true for any $\epsilon>0$, it follows that $\psi(T)=0$, and the proposition is proved.
\end{proof}

%
%
%

%
%
%
%
\section{Intermezzo: what we will do next and why}\label{se:intermezzo}

\subsection{General plan}

As noted above, we intend to prove the Paving Conjecture. The proof will lead us on an unexpected path, so we   explain here its main steps.

The Paving Conjecture asks us to find, for a given matrix $T$,  diagonal projections $Q_i$ that achieve certain norm estimates (namely, $\|Q_iTQ_i\|\le \epsilon\|T\|$). Among the different ways to estimate the norm, the proof in~\cite{MSS} choses a rather unusual one: it uses the fact that the norm of a positive operator is its largest eigenvalue. So we have to consider  characteristic polynomials of  matrices---in fact, the largest part of the proof is dedicated to estimating roots of such polynomials. (Although it has nothing to do with (KS), one should note the added benefit that we find a way to control with no extra effort all eigenvalues of the matrix, not only the largest one.)

On the other hand, to achieve this control we need to make an unexpected detour: though the characteristic polynomial depends on a single variable, in order to control it one has to go through multivariable polynomials and to use the theory of real stable polynomials as developed by Borcea and Br\"and\'en~\cite{BB}. This may seem unnatural, but it should be mentioned that Borcea and Br\"and\'en have already obtained through their methods spectacular results, in particular solving long-standing conjectures in matrix theory that also seemed at first sight to involve just a single complex variable~\cite{BB, BB2}. So maybe one should not be so surprised after all.

A second feature of the proof is its use, at some point, of a random space. After obtaining certain results about eigenvalues of usual matrices, suddenly random matrices appear on the scene. In fact, the use of randomness is not really essential; it rather provides a convenient notation for computing averages. As noted in the previous section, to prove (PC) we need to find a partition  of a finite set $\{1,\dots, m \}$ into $r$ subsets with certain properties.
The random space eventually considered is finite; its elements are all different such partitions, and no subtle probability is used: all decompositions are assumed to be equally probable. What we will achieve eventually is an estimate on the average of the largest eigenvalue, which will lead to an individual estimate for at least one point of the random space---that is, for one partition. This will be the desired partition.

\subsection{Sketch of the proof}

We summarize here the development of the proof. As announced above, we intend to discuss the eigenvalues of positive matrices, which are roots of the characteristic polynomial. So we need some preparation concerning polynomials and their roots; this is done first in one variable in Section~\ref{se:univ poly}. The main result here is Theorem~\ref{th:nice families}, that shows that certain families of polynomials have roots that behave  unexpectedly well with respect to averages. This will be used in Section~\ref{se:random} to link eigenvalues of random matrices to their averages.

But we have to go to polynomials in several variables, namely real stable polynomials, which are defined by a condition on their roots.  Section~\ref{se:real stable} is dedicated to real stable polynomials; after presenting their main properties, we are especially interested in some delicate estimate on the location of the roots, which is done through an associated function called the barrier function. The properties of the barrier function represent the most technical and not very transparent part of the proof. The main thing to be used in the sequel is Theorem~\ref{th:real stable no zeros after 1-partial}, that puts some restriction on the roots of a real stable polynomial.

We apply these facts to characteristic polynomials in Section~\ref{se:mixed char}. The voyage through several variables done for polynomials has a correspondent here in the introduction of the \emph{mixed characteristic polynomial}, which depends on several  matrices. It happens to be the restriction to one variable of a real stable polynomial, and so Theorem~\ref{th:real stable no zeros after 1-partial} can be used in Theorem~\ref{th:sum=1, trace small} to bound  the roots of a  mixed characteristic polynomial. Further, this bound translates in a bound for a usual characteristic polynomial 
 in the particular case when the matrices have rank one, since then  the mixed characteristic polynomial is precisely the characteristic polynomial of their sum.

Section~\ref{se:random} introduces random matrices; as discussed above, the probability space in view is that of all possible partitions. The main result, Theorem~\ref{th:min root le mixed char of expectation}, uses the results of Section~\ref{se:univ poly} to show that for a sum of independent random matrices of rank one,  the  eigenvalues of its average yield estimates for the averages of its eigenvalues, and thus for the eigenvalues of at least  one point of the probability space. In particular, applying this fact in conjunction with the bound on eigenvalues obtained in Section~\ref{se:mixed char}, we will obtain a partition with certain norm properties in Theorem~\ref{th:first theorem with partitions}.

Finally, this last fact is put to good use in Section~\ref{se:main proof} to obtain a proof of the  Paving Conjecture. The first step, that uses Theorem~\ref{th:first theorem with partitions}, obtains for orthogonal projections a quantitative version of (PC). To go from projections to general operators is well known since several decades and may be done in different ways. Here we use a dilation argument taken from~\cite{V} to obtain the Paving Conjecture for selfadjoint matrices; going to general matrices is then immediate.

\section{Analytic functions and univariate polynomials}\label{se:univ poly}

\subsection{Preliminaries}

The next theorem in complex function theory is a consequence of Cauchy's argument principle.

\begin{theorem}\label{th:Cauchy}
 Suppose $(f_n)$ is a sequence of analytic functions on a domain $D\subset \bbC$, which converges uniformly on compacts to the function $f\not\equiv 0$. If $\Gamma$ is a simple contour contained in $D$ such that $f$ has no zeros on $\Gamma$, then there is $n_0\in\bbN$ such for $n\ge n_0$ the number of zeros of $f_n$ and of $f$ in the interior of $\Gamma$  coincide.
		\end{theorem}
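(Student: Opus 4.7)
The plan is to reduce the statement to Cauchy's argument principle applied to both $f$ and $f_n$, and then exploit uniform convergence of the logarithmic derivatives on $\Gamma$ together with the fact that the resulting integrals must be integers.

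First I would extract from the hypotheses that $f_n$ is also zero-free on $\Gamma$ for $n$ large. Since $f$ is continuous and nonzero on the compact set $\Gamma$, there exists $\delta>0$ with $|f(z)|\ge\delta$ for $z\in\Gamma$. Uniform convergence on compacts gives $f_n\to f$ uniformly on $\Gamma$, hence $|f_n(z)|\ge\delta/2$ on $\Gamma$ for all $n\ge n_1$, and in particular $f_n$ has no zeros on $\Gamma$. By the argument principle, the number of zeros (counted with multiplicity) of $f$ inside $\Gamma$ is
\[
N(f)=\frac{1}{2\pi i}\int_\Gamma \frac{f'(z)}{f(z)}\,dz,
\]
and similarly $N(f_n)=\frac{1}{2\pi i}\int_\Gamma f_n'/f_n\,dz$ is defined for $n\ge n_1$.

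Next I would show these integrals converge. Pick a compact neighborhood $K\subset D$ of $\Gamma$ (for instance a thin tubular strip around $\Gamma$). Uniform convergence of $f_n\to f$ on $K$ implies uniform convergence of the derivatives $f_n'\to f'$ on $\Gamma$ (by Cauchy's integral formula applied to small circles inside $K$, or by Weierstrass' theorem on uniform convergence of analytic functions). Combined with the uniform lower bound $|f_n|\ge\delta/2$ on $\Gamma$, this yields
\[
\frac{f_n'}{f_n}\longrightarrow\frac{f'}{f}\quad\text{uniformly on }\Gamma.
\]
Therefore $N(f_n)\to N(f)$ as $n\to\infty$.

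Finally, both $N(f_n)$ and $N(f)$ are nonnegative integers, and a convergent sequence of integers is eventually constant. Hence there exists $n_0\ge n_1$ such that $N(f_n)=N(f)$ for all $n\ge n_0$, which is the desired conclusion. There is no real obstacle here beyond carefully ensuring that $f_n'\to f'$ uniformly on $\Gamma$, which is why I fix the slightly larger compact $K$ before applying the Cauchy estimates.
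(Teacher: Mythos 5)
Your proof is correct and is exactly the standard argument that the paper alludes to when it says the theorem ``is a consequence of Cauchy's argument principle'' — the paper itself gives no proof, so there is nothing to compare against beyond that hint. You correctly handle the two small technical points: establishing a uniform lower bound for $|f_n|$ on $\Gamma$ (so the argument-principle integrals are defined for large $n$), and invoking Weierstrass' theorem to get $f_n'\to f'$ uniformly on $\Gamma$, which together give $N(f_n)\to N(f)$ and hence eventual equality of these integers.
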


	The next corollary is usually called Hurwitz's Theorem if $m=1$. The general case follows simply by induction (exercise!).
		
		\begin{corollary}\label{co:Herglotz and other's}
			Suppose $p_n(z_1, \dots, z_m)$ are polynomials in $m$ variables, such that $p_n\to p$ uniformly on compacts in some domain $D\subset \bbC^m$. If $p_m$ has no zeros in $D$ for all $m$, then  either $p$ is identically zero, or it has no zeros in $D$.
		\end{corollary}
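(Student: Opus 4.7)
The plan is to argue by induction on $m$, as the paper suggests. For the base case $m=1$, the statement is Hurwitz's theorem, which I would deduce directly from Theorem~\ref{th:Cauchy}: if the limit $p$ is not identically zero yet has a zero $z_0 \in D$, then zeros of $p$ are isolated, so I can enclose $z_0$ in a small simple contour $\Gamma \subset D$ on which $p$ has no zeros; Theorem~\ref{th:Cauchy} then forces the zero counts of $p_n$ and $p$ inside $\Gamma$ to coincide for all large $n$, contradicting the nonvanishing of the $p_n$.

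For the inductive step, assume the result for $m-1$ variables, and suppose for contradiction that $p \not\equiv 0$ while $p(w) = 0$ at some $w = (w_1,\dots,w_m) \in D$. Since $D$ is open, I would choose a closed polydisc $\overline{B(w_1,r_1)} \times \overline{D'} \subset D$ centered at $w$, where $D' = \prod_{j=2}^{m} B(w_j, r_j) \subset \bbC^{m-1}$ is itself a (connected) domain — this choice is important, because the induction hypothesis is stated for domains, and slicing an arbitrary $D \subset \bbC^m$ need not produce one.

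Now I would apply the base case along the $z_1$-axis through $w$: the one-variable polynomials $z_1 \mapsto p_n(z_1,w_2,\dots,w_m)$ are nonvanishing on $B(w_1,r_1)$ and converge uniformly on compact subsets to $z_1 \mapsto p(z_1,w_2,\dots,w_m)$, whose value at $w_1$ is zero. Hurwitz (the $m=1$ case just established) forces $p(z_1,w_2,\dots,w_m) \equiv 0$ for all $z_1 \in B(w_1,r_1)$. Next, for each fixed $z_1^* \in B(w_1,r_1)$, the $(m-1)$-variable polynomials $p_n(z_1^*,\,\cdot\,)$ are nonvanishing on $D'$ and converge uniformly on compacts to $p(z_1^*,\,\cdot\,)$. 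By the inductive hypothesis applied on $D'$, the limit is either identically zero or nowhere zero on $D'$; the previous step shows it vanishes at $(w_2,\dots,w_m)$, so the first alternative must hold. Letting $z_1^*$ range, I conclude $p \equiv 0$ on the whole polydisc $B(w_1,r_1) \times D'$. Finally, because $p$ is a polynomial and $D$ is connected, vanishing on a nonempty open subset of $D$ forces $p \equiv 0$ on $D$, contradicting the assumption.

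The argument is essentially bookkeeping once the one-variable Hurwitz is in hand; I expect the only point worth being careful about is the choice of the sliced domain $D'$ to be connected so that the inductive hypothesis applies verbatim. Using a polydisc handles this cleanly.
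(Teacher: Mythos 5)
Your proof is correct and carries out precisely the induction on $m$ that the paper leaves as an exercise, reducing the base case to Theorem~\ref{th:Cauchy} and then slicing variable by variable. The care you take to replace the arbitrary slice of $D$ by a polydisc (so the cross-section stays connected and the inductive hypothesis applies verbatim) is exactly the right point to watch; the final appeal to the fact that a polynomial vanishing on a nonempty open set is identically zero closes the argument cleanly.
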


%
%
%
%
%

If $ f $ is a polynomial of degree $n$ with all coefficients and all   roots real, we denote its roots by
\[
\rho_n(f)\le\dots\le \rho_1(f).
\]

\begin{corollary}\label{co:continuity of real roots}
	Suppose $p_s(z)=\sum_{i=1}^{n} a_i(s)z^i$, with $a_i:I\to\bbR$ continuous functions on an interval $I\subset \bbR$, $a_n(s)\not=0$ on $I$. If $p_s$ has real roots for all $s\in I$, then the roots $\rho_1(p_s), \dots, \rho_n(p_s)$ are continuous functions of $s\in I$.
\end{corollary}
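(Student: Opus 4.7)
The plan is to reduce the claim to Theorem~\ref{th:Cauchy} applied to $p_{s_n}$ for an arbitrary sequence $s_n \to s_0$ in $I$. Since the coefficients $a_i$ are continuous on $I$ and $a_n(s_0) \neq 0$, one checks at once that $p_{s_n} \to p_{s_0}$ uniformly on every compact subset of $\bbC$ (the coefficients converge and the monomials $z^i$ are uniformly bounded on compacts). So the hypothesis of Theorem~\ref{th:Cauchy} is satisfied on any domain on whose boundary $p_{s_0}$ does not vanish.

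Next I would fix $s_0 \in I$, list the distinct real roots $\lambda_1 > \lambda_2 > \cdots > \lambda_k$ of $p_{s_0}$ with multiplicities $m_1, \dots, m_k$ (so $\sum_j m_j = n$), and choose $\epsilon>0$ small enough that the closed intervals $[\lambda_j-\epsilon,\lambda_j+\epsilon]$ are disjoint. Taking for each $j$ the circle $\Gamma_j$ of radius $\epsilon$ centered at $\lambda_j$, we have that $p_{s_0}$ does not vanish on $\Gamma_j$, so Theorem~\ref{th:Cauchy} gives an index $N$ such that for $n\ge N$ each $p_{s_n}$ has exactly $m_j$ zeros (counted with multiplicity) in the open disk $D_j$ of radius $\epsilon$ around $\lambda_j$. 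Because these counts sum to $n = \deg p_{s_n}$ and all roots of $p_{s_n}$ are real by hypothesis, the $m_j$ roots of $p_{s_n}$ inside $D_j$ must actually lie in the interval $[\lambda_j-\epsilon,\lambda_j+\epsilon]$, and no root of $p_{s_n}$ lies outside the union of these intervals.

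Arranging the roots of $p_{s_n}$ in decreasing order $\rho_1(p_{s_n}) \ge \cdots \ge \rho_n(p_{s_n})$, the partition just obtained forces
\[
\rho_{m_1+\cdots+m_{j-1}+1}(p_{s_n}),\,\dots,\,\rho_{m_1+\cdots+m_j}(p_{s_n}) \in [\lambda_j-\epsilon,\lambda_j+\epsilon]
\]
for every $j$, which is exactly the statement that $|\rho_i(p_{s_n})-\rho_i(p_{s_0})|\le \epsilon$ for each $i$ and all $n\ge N$. Since $\epsilon>0$ was arbitrary and $(s_n)$ was an arbitrary sequence in $I$ converging to $s_0$, this proves the continuity of every $\rho_i$ at $s_0$.

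The only subtle point I foresee is making sure that the degree does not drop in the limit process; this is exactly the role of the hypothesis $a_n(s)\neq 0$ on $I$, ensuring that $p_{s_n}$ always has $n$ roots and hence that the count on circles can be made to exhaust all of them. Everything else is a clean application of the argument principle together with the reality of the roots.
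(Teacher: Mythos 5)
Your proof is correct, and it takes a genuinely different (and arguably cleaner) route than the paper's. The paper argues by induction on the degree $n$: it first shows that the largest root $\rho_1(p_s)$ is continuous at a point $s_0$ by isolating it with a small circle and applying Theorem~\ref{th:Cauchy} (together with a second circle to the right to rule out larger roots), then factors $p_s(z)=(z-\rho_1(p_s))q_s(z)$ and invokes the inductive hypothesis on $q_s$. Your argument instead treats all roots simultaneously in a single pass: you surround each \emph{distinct} root $\lambda_j$ of $p_{s_0}$ by a small circle, use Theorem~\ref{th:Cauchy} to get the right local multiplicities, and then use the degree count $\sum_j m_j = n$ (guaranteed by $a_n\not=0$) plus reality of the roots to conclude that the sorted roots cannot shuffle between intervals. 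What your approach buys is the avoidance of the factorization step (which in the paper requires knowing that the coefficients of $q_s$ are again continuous) and the absence of induction; what the paper's approach buys is a slightly lighter setup at each stage since it only ever isolates a single root. One small point worth making explicit in your write-up: after locating the roots in the disjoint intervals, the step where the sorted roots of $p_{s_n}$ are matched index-by-index to those of $p_{s_0}$ uses that the intervals $[\lambda_j-\epsilon,\lambda_j+\epsilon]$ are not only pairwise disjoint but also totally ordered (i.e.\ $\epsilon < \tfrac{1}{2}\min_j(\lambda_j-\lambda_{j+1})$), so that the decreasing enumeration respects the block structure $m_1, m_2, \dots, m_k$; you clearly have this in mind, but it is the crux of the matching and deserves a sentence.
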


\begin{proof} We use induction with respect to $n$. The case $n=1$ is obvious. Then, for a general $n$,
	we prove first that $\rho_1(p_s)$ is continuous, say in 
	 $s_0\in I$.
	 Take $\epsilon>0$, and suppose also that
	  $p_{s_0}(s_0\pm\epsilon)\not=0$. 
	By continuity of $a_i$,  $p_s(s_0\pm\epsilon)\not=$ for $s$ sufficiently close to $s_0$, and so $p_s(z)\not=0$
	for $z$ on the  circle $\Gamma$ of diameter $[s_0-\epsilon, s_0+\epsilon]$ (since all $p_s$ have real roots). By Theorem~\ref{th:Cauchy} all $p_s$ have at least one root inside $\Gamma$ for $s$ sufficiently close to $s_0$. A similar argument, using a circle at the right of $s_0+\epsilon$, shows that the $p_s$ have no roots larger than $b$. It follows that $\rho_1(p_s)\in (a,b)$ for $s$ close to $s_0$.
	
	If we
	write now $p_s(z)=(z-\rho_1(p_s)) q_s(z)$, 
	then $q_s$ has degree $n-1$ and continuous coefficients, so its roots are continuous by the induction hypothesis. But we have $\rho_i(p)=\rho_{i-1}(q)$ for $i\ge 2$.	
\end{proof}

\begin{remark}
	Even without the assumption that the roots are real, one can prove that there exist continuous functions $\rho_i(s):I\to \bbC$, $i=1, \dots, n$, such that the roots of $p_s$ are $\rho_1(s), \dots ,\rho_n(s)$ for all $s\in I$. The proof is more involved; see, for instance,~\cite[II.5.2]{K}.
	\end{remark}

We prove next two lemmas about polynomials with real coefficients and real roots.

\begin{lemma}\label{le:changing signs}
	Suppose the polynomial $p$ of degree $n$ has real coefficients, real roots, and the leading term positive.  Moreover, assume that there exist real numbers $a_{n+1}<a_n<\cdots<a_1$ such that $\rho_j(p)\in [a_{j+1}, a_j]$ for all $j=1, \dots, n$. Then $(-1)^{j-1}p(a_j)\ge 0$ for all $j=1, \dots, n$.
\end{lemma}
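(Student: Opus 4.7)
The strategy is to write $p$ in fully factored form and then determine the sign of each linear factor at $a_j$ from the interlacing hypothesis; the conclusion will be a matter of counting negative factors.

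More concretely, since $p$ has real coefficients, real roots, and positive leading coefficient, we may write
\[
p(z) = c \prod_{i=1}^{n} (z - \rho_i(p)), \qquad c > 0.
\]
Plugging in $a_j$, the sign of $p(a_j)$ is determined by the sign of each of the $n$ real numbers $a_j - \rho_i(p)$. Here I would split into two cases based on how $i$ compares to $j$.

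For $i \leq j-1$: the hypothesis gives $\rho_i(p) \geq a_{i+1}$, and since $i+1 \leq j$ and the sequence $(a_k)$ is strictly decreasing, $a_{i+1} \geq a_j$. Hence $\rho_i(p) \geq a_j$, so $a_j - \rho_i(p) \leq 0$. That accounts for exactly $j-1$ non-positive factors. For $i \geq j$: the hypothesis gives $\rho_i(p) \leq a_i$, and since $i \geq j$, $a_i \leq a_j$, so $a_j - \rho_i(p) \geq 0$. That accounts for $n-j+1$ non-negative factors.

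Multiplying everything together with the positive leading constant $c$, the sign of $p(a_j)$ is $(-1)^{j-1}$ (times a non-negative real), which is exactly the claim $(-1)^{j-1} p(a_j) \geq 0$. There is no real obstacle here: once the factorization is written down and the intervals are compared, the whole lemma reduces to counting how many of the factors $a_j - \rho_i(p)$ are forced to be $\leq 0$. The only thing one has to be a bit careful about is that equalities $\rho_i(p) = a_i$ or $\rho_i(p) = a_{i+1}$ are allowed by the hypothesis, which is why the conclusion is stated with a non-strict inequality $\geq 0$ rather than a strict one.
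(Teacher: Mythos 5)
Your proof is correct, and it takes a genuinely different (and simpler) route than the paper's. The paper proves this lemma by induction on $n$, splitting into two cases at each stage: either the roots of $p$ account for all but one of the $a_j$, or there is a root $\alpha$ strictly between two consecutive $a_j$'s, in which case one factors $p(z) = (z-\alpha)q(z)$ and applies the induction hypothesis to $q$. Your argument bypasses induction entirely: factor $p$ over $\mathbb R$ and, for each index $j$, count the factors $a_j - \rho_i(p)$ that are forced to be non-positive (exactly the $j-1$ indices $i \le j-1$, via $\rho_i(p) \ge a_{i+1} \ge a_j$) versus non-negative (the remaining $n-j+1$ indices $i \ge j$, via $\rho_i(p) \le a_i \le a_j$). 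The sign of the product is then $(-1)^{j-1}$ up to a non-negative factor. The index bookkeeping is exact, the use of non-strict inequalities is handled consistently, and the conclusion follows immediately. Your approach is arguably preferable for exposition: it is shorter, avoids case analysis, and makes transparent why the conclusion holds with $\ge$ rather than $>$.
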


In other words, $p$ changes signs (not necessarily strictly) on each of the intervals $[a_{j+1}, a_j]$.

\begin{proof}
	We will use induction with respect to $n$. For $n=1$ the claim is obviously true. Suppose it is true up to $n-1$, and let $p$ be a polynomial of degree $n$ as in the statement of the lemma. There are two cases to consider.
	
	Suppose first that the roots of $p$ are exactly all points $a_j$ except some $a_{j_0}$. Then $p$ has only simple roots, so it changes signs in each of them. As  $p(x)>0$ for $x>a_1$, we have $p(x)<0$ on $(a_2, a_1)$, etc, up to $(-1)^{j_0-1}p(x)>0$ on $(a_{j_0+1}, a_{j_0-1})$. Therefore $(-1)^{j_0-1}p(a_{j_0})>0$; the other inequalities are trivial.
	
	In the remaining case, there is at least one root $\alpha$ of $p$ that is not among the points $a_j$; suppose  $\alpha\in (a_{j_0}, a_{j_0-1})$. If $p(z)=(z-\alpha)q(z)$, then $q$ has degree $n-1$ and satisfies the hypotheses of the lemma with respect to the points $a_j$ with $j\not=0$. Then $p(a_j)$ has the same sign as $q(a_j)$ for $j<j_0$ and opposite sign for $j>j_0$; from here it follows easily that the correct signs for $q$ (which we know true by the induction hypothesis) produce the correct signs for $p$.
\end{proof}

\begin{lemma}\label{le:properties of Phi in one variable}
	Suppose the polynomial $p$ has real coefficients and all roots real. Then
	\[
	(-1)^k \left(\frac{d}{dx}\right)^k \frac{p'}{p}(x) >0
	\]
	for all $k\in\bbN$ and $x>\rho_1(p)$.
	
	In particular, $\frac{p'}{p}$ is positive, nonincreasing, and convex for $x>\rho_1(p)$.
\end{lemma}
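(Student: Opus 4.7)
The plan is to obtain an explicit formula for $p'/p$ as a sum of simple fractions, and then differentiate termwise. Since by hypothesis $p$ has real coefficients and all its roots are real, one can factor
\[
p(x)=c\prod_{i=1}^n (x-\rho_i(p)),
\]
where $c\in\bbR$ is the leading coefficient. Taking the logarithmic derivative yields the partial fraction expansion
\[
\frac{p'(x)}{p(x)}=\sum_{i=1}^n \frac{1}{x-\rho_i(p)},
\]
valid away from the roots of $p$, in particular for all $x>\rho_1(p)$.

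Next, I would differentiate this identity $k$ times. Since
\[
\left(\frac{d}{dx}\right)^k \frac{1}{x-\rho_i(p)}=\frac{(-1)^k\,k!}{(x-\rho_i(p))^{k+1}},
\]
one obtains
\[
(-1)^k\left(\frac{d}{dx}\right)^k \frac{p'(x)}{p(x)}=\sum_{i=1}^n \frac{k!}{(x-\rho_i(p))^{k+1}}.
\]
For $x>\rho_1(p)\ge \rho_i(p)$ each term on the right is strictly positive, so the whole sum is strictly positive, which is the required inequality.

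The ``in particular'' clause then follows by specializing $k=0,1,2$: $k=0$ gives positivity of $p'/p$, $k=1$ gives that its derivative is strictly negative (hence $p'/p$ is strictly decreasing, in particular nonincreasing), and $k=2$ gives that its second derivative is positive (hence convex). There is essentially no obstacle in this argument; the whole content is the partial fraction decomposition and termwise differentiation, both of which are elementary once one has the factorization of $p$ over $\bbR$ guaranteed by the hypothesis that all roots are real.
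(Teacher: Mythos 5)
Your proof is correct and follows the same route as the paper: factor $p$ over $\bbR$, take the logarithmic derivative to get the partial-fraction sum $\sum_i (x-\rho_i(p))^{-1}$, differentiate termwise, and observe each term is positive for $x>\rho_1(p)$. The only cosmetic difference is that you carry the leading coefficient $c$ explicitly (it cancels in $p'/p$) and spell out the $k=0,1,2$ specializations for the final clause; the substance is identical.
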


\begin{proof}
	If $p(z)=\prod_{i=1}^{n} (z-\rho_i(p))$, then $\frac{p'}{p}(z)= \sum_{i=1}^{n} \frac{1}{z-\rho_i(p)}$, and
	\[
	(-1)^k \left(\frac{d}{dx}\right)^k \frac{p'}{p}(x)= k! 
	\sum_{i=1}^{n} \frac{1}{(z-\rho_i(p))^{k+1}}.
	\]
	All terms in the last sum are positive for $x>\rho_1(p)$, so the lemma is proved.
\end{proof}

\subsection{Nice families}

Suppose $\FF=\{f_1,\dots f_m\}$ is a family of polynomials of the same degree~$n$. We denote 
\[
\rho^+_j(\FF):=	\max_{1\le i\le m}\rho_j(f_i), \quad\rho^-_j(\FF):=	\min_{1\le i\le m}\rho_j(f_i)
\]

\begin{definition}\label{de:nice family}
	For a family of polynomials $\FF=\{f_1,\dots f_m\}$ of the same degree $n $ a \emph{nice family} iff:
	\begin{enumerate}
		\item the coefficient of the dominant term of every $ f_j $ is positive;
		\item every $ f_j $ has all roots real;
		\item for all $ j=2,\dots, n $ we have
	\begin{equation}\label{eq:condition for nice families}
		\rho^+_j(\FF) \le\rho^-_{j-1}(\FF).
	\end{equation}
	
	\end{enumerate}
	
\end{definition}

The usual formulation (including~\cite{MSS}) is that the $f_i$s \emph{have a common interlacing}. Since the actual interlacing polynomial never enters our picture, we prefer this simpler phrasing.

\begin{lemma}\label{le:nice family:basic properties}

\begin{itemize}
	\item[(i)] $\{f_1,\dots f_m\}$ is nice iff every pair $\{f_r, f_s\}$, $ r\not= s $, is nice.
	
	\item[(ii)] Every subfamily of a nice family is nice.
	
	\item[(iii)] If $ a\in\bbR $, then  $\FF=\{f_1,\dots f_m\}$ is nice if and only if $\GG=\{(x-a)f_1,\dots (x-a)f_m\}$ is nice.
\end{itemize}
	
\end{lemma}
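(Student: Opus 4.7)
The plan is to dispatch (ii) and (i) by direct bookkeeping, then to reduce (iii) to the case of a pair via (i) and handle it through a counting reformulation of niceness.

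For (ii), if $\FF' \subset \FF$, then the max of $\rho_j$ can only decrease and the min of $\rho_{j-1}$ can only increase when the family shrinks, so
\[
\rho_j^+(\FF') \le \rho_j^+(\FF) \le \rho_{j-1}^-(\FF) \le \rho_{j-1}^-(\FF'),
\]
and conditions (1) and (2) are inherited trivially. For (i), the forward direction is an immediate application of (ii) to each pair. For the converse I would fix $j \in \{2,\dots,n\}$ and pick $r^*, s^*$ realizing $\rho_j(f_{r^*}) = \rho_j^+(\FF)$ and $\rho_{j-1}(f_{s^*}) = \rho_{j-1}^-(\FF)$: if $r^* = s^*$ the required inequality is just the ordering of roots inside a single polynomial, and otherwise it is the pairwise condition applied to $\{f_{r^*}, f_{s^*}\}$.

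For (iii) I would first use (i) to reduce to the case $\FF = \{f_1,f_2\}$ and $\GG = \{(x-a)f_1,(x-a)f_2\}$, since each side of the biconditional is equivalent to niceness of every pair in the corresponding family. Conditions (1) and (2) for $\GG$ follow from those for $\FF$. The crucial ingredient for (3) is the following counting reformulation: writing
\[
N_h(y) := |\{k : \rho_k(h) > y\}|,
\]
a pair $\{f_1,f_2\}$ of polynomials of equal degree with positive leading coefficient and all real roots is nice if and only if $|N_{f_1}(y) - N_{f_2}(y)| \le 1$ for every $y \in \bbR$. Granting this, the conclusion is immediate, because the identity
\[
N_{(x-a)f_i}(y) = N_{f_i}(y) + \mathbf{1}_{\{y<a\}}
\]
yields $N_{(x-a)f_1}(y) - N_{(x-a)f_2}(y) = N_{f_1}(y) - N_{f_2}(y)$ pointwise, so the counting criterion transfers verbatim in both directions.

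The main obstacle is thus the verification of the counting equivalence. If condition (3) fails at some index $j$, say $\rho_j(f_1) > \rho_{j-1}(f_2)$, choosing $y$ strictly between these two values yields $N_{f_1}(y) \ge j$ and $N_{f_2}(y) \le j-2$, a gap of at least $2$. Conversely, if the counts differ by at least $2$ at some $y$, say $N_{f_1}(y) \ge \ell+2$ and $N_{f_2}(y) \le \ell$, then $\rho_{\ell+2}(f_1) > y \ge \rho_{\ell+1}(f_2)$, violating (3) at $j = \ell+2$ (with $\ell+2 \le n$ automatic since $N_{f_1}(y) \le n$). Both directions are short once set up correctly, and this viewpoint avoids the otherwise tedious case analysis on where $a$ sits among the roots of each $f_i$ that would arise from trying to write $\rho_j(g_i)$ directly in terms of $\rho_j(f_i)$.
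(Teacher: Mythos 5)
Your proof is correct. For (i) and (ii) you spell out what the paper dismisses as ``immediate,'' and the details check out. For (iii) you take a genuinely different route from the paper: the paper computes $\rho_j^\pm(\GG)$ from $\rho_j^\pm(\FF)$ by a case analysis on where $a$ falls among the intervals $[\rho_j^-(\FF),\rho_j^+(\FF)]$ and the gaps between them, then verifies the niceness inequalities directly. You instead reduce to pairs via (i) and introduce the counting reformulation---a pair satisfying the degree, leading-coefficient, and real-root hypotheses is nice iff $|N_{f_1}(y)-N_{f_2}(y)|\le 1$ for all $y$---under which invariance under multiplication by $(x-a)$ is manifest, since $N_{(x-a)f_i}(y)-N_{f_i}(y)=\mathbf{1}_{\{y<a\}}$ does not depend on $i$. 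This is cleaner: it treats both directions of the biconditional symmetrically, whereas the paper's case analysis is organized from the structure of $\FF$ and leaves the converse direction implicit, and it sidesteps the index bookkeeping that produces a typo in the paper's case (1) (where $\rho^-_{j_0}(\GG)=\rho^+_{j_0+1}(\GG)=1$ should read $=a$). One place you could tighten the write-up: in the forward direction of the counting equivalence, a failure of condition (3) in a pair is necessarily of the form $\rho_j(f_r)>\rho_{j-1}(f_s)$ with $r\ne s$, because within a single polynomial $\rho_j\le\rho_{j-1}$ is automatic from the ordering of roots; that observation is what legitimizes your WLOG choice $\rho_j(f_1)>\rho_{j-1}(f_2)$.
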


\begin{proof}
	(i) and (ii) are immediate. For (iii), there are several cases to consider:
	\begin{enumerate}
		\item If $a\in[\rho^-_{j_0}(\FF), \rho^+_{j_0}(\FF)]$ for some $j_0$, then
		\[
		\begin{split}
		\rho^\pm_j(\GG)&= \rho^\pm_j(\FF)\text{ for }j<j_0,\\
		\rho^+_{j_0}(\GG)&=\rho^+_{j_0}(\FF), \quad 	\rho^-_{j_0}(\GG)=	\rho^+_{j_0+1}(\GG)=1,\quad
		\rho^-_{j_0+1}(\GG)=\rho^-_{j_0}(\FF),\\
		\rho^\pm_j(\GG)&= \rho^\pm_{j-1}(\FF)\text{ for }j>j_0+1.
		\end{split}
		\]
		
		\item
		If $a\in(\rho^+_{j_0}(\FF), \rho^-_{j_0-1}(\FF))$ for some $j_0$, then
		\[
		\begin{split}
		\rho^\pm_j(\GG)&= \rho^\pm_j(\FF)\text{ for }j<j_0,\\
		\rho^\pm_{j_0}(\GG)&=a,\\
			\rho^\pm_j(\GG)&= \rho^\pm_{j-1}(\FF)\text{ for }j>j_0.
		\end{split}
		\]
	
	\end{enumerate} 
		The formulas in (1) are also valid if $a>\rho^+_1(\FF)$ (taking $j_0=1$) or $a<\rho^-_n(\FF)$ (taking $j_0=n+1$). In all these cases one can easily check that (iii) is true.
		\end{proof}	
	
	As a consequence of Lemma~\ref{le:nice family:basic properties}, in order to check that a family is nice we can always assume that it has no common zeros.

The main theorem of this section is the characterization of nice families that follows.

\begin{theorem}\label{th:nice families}
	Suppose $ f_{1}, \dots , f_{m} $ are all polynomials of degree $ n $, with positive dominant coefficients. The following are equivalent:
	\begin{enumerate}
		\item $\FF=\{f_1,\dots f_m\}$ is a nice family.
		
		\item Any convex combination of $ f_{1}, \dots , f_{m} $ has only real roots.
	\end{enumerate}
	
	If these conditions are satisfied, then for any $ j=1,\dots , n $ we have
	\begin{equation}\label{eq:roots of nice family}
	\min_{i}\rho_j(f_i) \le \rho_j(f)\le \max_{i}\rho_j(f_i)
	\end{equation}
	for any convex combination $f= \sum_k t_kf_k $.
\end{theorem}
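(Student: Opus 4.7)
The plan is to prove the two implications separately, with the root bound~(\ref{eq:roots of nice family}) falling out of the argument for $(1)\Rightarrow(2)$.

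For $(1)\Rightarrow(2)$ and the bound: by Lemma~\ref{le:nice family:basic properties}(iii) I reduce to the case where $\FF$ has no common zeros. This ensures, for each $j$, that $\rho_{j-1}^-(\FF) < \rho_{j-1}^+(\FF)$ (otherwise all $f_i$ would share a common $(j-1)$st root), so I can pick a strictly decreasing real sequence $a_{n+1} < a_n < \cdots < a_1$ with $a_j \in [\rho_j^+(\FF), \rho_{j-1}^-(\FF)]$ for $2 \le j \le n$, $a_1 > \rho_1^+(\FF)$, and $a_{n+1} < \rho_n^-(\FF)$. By Lemma~\ref{le:changing signs}, $(-1)^{j-1} f_i(a_j) \ge 0$ for every $i,j$, so $(-1)^{j-1} f(a_j) \ge 0$ for every convex combination $f = \sum_k t_k f_k$. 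A short induction on $n$---dividing out the factor $(x-a_{j_0})$ whenever $f(a_{j_0}) = 0$ and applying the inductive hypothesis to the quotient---then shows that $f$ has $n$ real roots, with exactly one in each interval $[a_{j+1},a_j]$. The bounds $\rho_j^-(\FF) \le \rho_j(f) \le \rho_j^+(\FF)$ of~(\ref{eq:roots of nice family}) follow by taking $a_j = \rho_j^+(\FF)$ and $a_{j+1} = \rho_j^-(\FF)$ in two separate applications.

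For $(2)\Rightarrow(1)$: condition~$(2)$ passes trivially to subfamilies and niceness is characterized pairwise by Lemma~\ref{le:nice family:basic properties}(i), so I may take $m=2$; Lemma~\ref{le:nice family:basic properties}(iii) also lets me strip common factors, so $f_1$ and $f_2$ share no zeros. Suppose towards a contradiction that $\{f_1,f_2\}$ is not nice; after possibly swapping there is $j_0$ with $a := \rho_{j_0}(f_1) > \rho_{j_0-1}(f_2) =: b$, and the no-common-zero reduction gives $f_2(a) \ne 0$ and $f_1(b) \ne 0$. Under~$(2)$, Corollary~\ref{co:continuity of real roots} produces continuous root functions $\rho_k(f_t)$ on $[0,1]$ for $f_t := (1-t)f_1 + tf_2$. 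Let $\mu$ denote the multiplicity of $a$ in $f_1$ and write $f_1(x) = (x-a)^\mu g_1(x)$ with $g_1(a) \ne 0$; the local expansion
\[
f_t(a+u) = g_1(a)\,u^\mu + t\,f_2(a) + O\bigl(u^{\mu+1} + tu\bigr)
\]
leads to $u^\mu \approx -t f_2(a)/g_1(a)$ near $u = 0$, which for $\mu \ge 3$ has at most two real solutions; this forces at least $\mu-2 \ge 1$ of the emerging roots of $f_t$ to be non-real for small $t > 0$, contradicting~$(2)$. So $\mu \in \{1,2\}$, and in either case $\rho_{j_0-1}(f_t) > a$ for all small $t > 0$: either $\rho_{j_0-1}(f_1) > a$ strictly and continuity suffices, or $\rho_{j_0-1}(f_1) = a$ (possible only for $\mu = 2$) and the double root at $a$ splits as $a \pm |u(t)|$ with the larger piece becoming $\rho_{j_0-1}(f_t)$. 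Since $\rho_{j_0-1}(f_t)$ equals $b < a$ at $t = 1$, continuity produces $t^* \in (0,1]$ with $\rho_{j_0-1}(f_{t^*}) = a$; but then $a$ is a root of $f_{t^*}$, whence $0 = f_{t^*}(a) = t^* f_2(a)$ and so $t^* = 0$, a contradiction.

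The main obstacle is the local analysis at $a$ in the proof of $(2)\Rightarrow(1)$: one must rule out every multiplicity $\mu \ge 3$ via the Puiseux-type expansion (checking that higher roots of $u^\mu = c$ cannot all be real) and, for $\mu = 2$, correctly identify which sorted position the two emerging real roots occupy. The rest of the argument is a clean intermediate-value step, but keeping the bookkeeping around the ``touching'' value $a$ straight is the most delicate part.
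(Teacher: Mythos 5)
Your proof of $(1)\Rightarrow(2)$, including the bound~\eqref{eq:roots of nice family}, follows the same skeleton as the paper's: reduce to a family with no common zeros, pick a strictly decreasing chain of test points bracketing the sorted roots, apply Lemma~\ref{le:changing signs} to each $f_i$ to get the alternating sign conditions, and transfer them to the convex combination $f$. The paper avoids your extra induction by a more careful choice of test points: it takes $a_j=\rho_j^-(\FF)$, which is a root of some $f_{i_0}$ in the family; then the no-common-zero hypothesis guarantees that some other $f_{i_1}$ is \emph{nonzero} there with the correct sign, so $f$ is \emph{strictly} nonzero at every test point and the $n$ sign changes are automatic. Your version, where $a_j$ lies somewhere in the gap $[\rho_j^+,\rho_{j-1}^-]$, can hit a zero of $f$ when that gap degenerates to a point, and your induction (dividing out $(x-a_{j_0})$ when $f(a_{j_0})=0$) is a correct way to patch that --- it is simply a slightly longer route to the same end.

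For $(2)\Rightarrow(1)$ your route is genuinely different from the paper's and both work. The paper also reduces to $m=2$ with no common roots, but then proceeds globally: it first observes via Corollary~\ref{co:continuity of real roots} that $\rho_j(f_t)$ sweeps out $(\rho_j^-,\rho_j^+)$, deduces that this interval contains no root of $f_1$ or $f_2$, settles the simple-root case by a disjointness argument, and finally shows --- using Theorem~\ref{th:Cauchy} applied to the equation $f_2/f_1 = t/(t-1)$ --- that $f_t$ has only simple roots for $0<t<1$, letting it pass to the limit from $f_\epsilon, f_{1-\epsilon}$. Your proof instead zooms in on the single offending root $a=\rho_{j_0}(f_1)$ and performs a Puiseux/Newton-polygon analysis of the $\mu$ roots of $f_t$ emerging from $a$, killing $\mu\ge 3$ outright via non-real branches and then using the intermediate value theorem to force $f_{t^*}(a)=0$, which is impossible since $f_{t^*}(a)=t^*f_2(a)\ne 0$. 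This is correct, but it shoulders exactly the bookkeeping you flag: making the expansion $f_t(a+u)\approx g_1(a)u^\mu + tf_2(a)$ rigorous (via Hurwitz in the rescaled variable $s=u\,t^{-1/\mu}$, plus a Rouch\'e count to ensure exactly $\mu$ roots stay near $a$), tracking which sorted index the split double root lands in, and noting that the sign of $f_2(a)/g_1(a)$ is itself constrained by hypothesis~$(2)$ in the $\mu=2$ case. The paper's reduction to simple roots trades all of this local analysis for one application of the argument principle and is noticeably shorter; your version avoids the limiting step $\epsilon\to 0$ and produces the contradiction directly at the bad index $j_0$, which some readers may find more concrete.
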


\begin{proof}
(1)$\implies$(2). We may suppose by (ii) and (iii) of Lemma~\ref{le:nice family:basic properties}  that all coefficients $t_k$ are positive and that the family has no common zeros. In particular, if we denote $\rho^\pm_j=\rho^\pm_j(\FF)$, this implies $\rho^-_j<\rho^+_j\le\rho^-_{j-1} $ for all $j$.

We will apply Lemma~\ref{le:changing signs} to each of the polynomials $f_i$ and the points $\rho^-_n<\rho^-_{n-1}<\dots<\rho^-_1<\rho^+_1$. We obtain then, for each $i=1, \dots, m$, that $(-1)^j f_i(\rho^-_j)\ge0$ for all  $j$, and $ f_i(\rho^+_1)\ge0$. 

Fix $j$; since the family $\FF$ has no common zero, at least one of $f_i$ is nonzero in $\rho^-_j$, and so $(-1)^j f(\rho^-_j)>0$. Similarly, $ f(\rho^+_1)>0$. Therefore on each of the intervals $(\rho^-_j, \rho^-_{j-1})$, as well as on $(\rho^-_1, \rho^+_1)$, $f$ changes sign (strictly), and therefore must have a root in the interior. 
Since there are $n$ intervals, we have thus found $n$ roots of $f$, and so all its roots are real. Moreover, we have obtained $\rho_j(f)>\rho_j^-$ for all $j$. 

On the other hand, we might have used, in applying Lemma~\ref{le:changing signs} to  the polynomials $f_i$,  the points $\rho^-_n<\rho^+_{n}< \rho^+_{n-1}\dots<\rho^+_1$ instead of $\rho^-_n<\rho^-_{n-1}<\dots<\rho^-_1<\rho^+_1$. A similar argument yields then $\rho_j(f)<\rho_j^+$ for all $j$. Therefore 
the inequalities~\eqref{eq:roots of nice family} are proved.

(2)$ \implies $(1). According to Lemma~\ref{le:nice family:basic properties} it is enough to prove the implication for two functions $f_1, f_2$, and we may also suppose that they have no common roots. Fix $2\le j\le n$; we have to prove that $\rho_j^+\le \rho_{j-1}^-$.
Denote $ f_t=tf_1+(1-t)f_2 $ ($ 0\le t\le 1 $). By Corollary~\ref{co:continuity of real roots} the function $t\mapsto\rho_j(f_t)$ is continuous on $ [0,1] $ and takes only real values; so its values for $0<t<1$ cover the interval $(\rho_j^-, \rho_j^+)$. It follows that this interval cannot contain a root of either $f_1$ or $f_2$, since a common root of, say, $f_1$ and $f_t$ is also a root of $f_2$.

Suppose then first that $f_1$ and $f_2$ have only simple roots. Then the intervals $[\rho_j^-, \rho_j^+]$ and $[\rho_{j-1}^-, \rho_{j-1}^+]$ have all four endpoints disjoint, and by definition $\rho_j^-< \rho_{j-1}^-$. If $\rho_{j-1}^- \in (\rho_j^-, \rho_j^+)$, this would contradict the conclusion of the preceding paragraph. So $\rho_{j-1}^->\rho_j^+$ and~\eqref{eq:condition for nice families} is proved.

To obtain the general case, note first that $f_t$ has all roots simple for $0<t<1$. Indeed, a multiple solution $x$ of $f_t=0$ would also be a multiple solution of $\frac{f_2}{f_1}=\frac{t}{t-1}$. But it is easy to see (draw the graph!) that then  $\frac{f_2}{f_1}=\frac{t'}{t'-1}$ has a single root in some interval $(x-\epsilon,x+\epsilon)$ for at least some $t'$ close to $t$ (slightly larger or slightly smaller). However, from Theorem~\ref{th:Cauchy} it follows that $f_{t'}$ has more than one root in the disc $|z-x|<\epsilon$, and so $f_{t'}$ would not have all roots real.

To end the proof, we apply the first step to $f_\epsilon$ and $f_{1-\epsilon}$ ($\epsilon>0$), which have only simple roots. Then we let $\epsilon\to 0$ and use Corollary~\ref{co:continuity of real roots} to obtain inequality~\eqref{eq:condition for nice families}.
\end{proof}

\section{Several variables: real stable polynomials}\label{se:real stable}

\subsection{General facts}

Denote $\bbH=\{ z\in \bbC: \Im z>0 \}$.

\begin{definition}
	A polynomial $p(z_1,\dots, z_m)$ is called \emph{real stable} it has real coefficients and it has no zeros in $\bbH^m$.
\end{definition}

In case $m=1$ a real stable polynomial is a polynomial that has real coefficients and real zeros. Genuine examples in several variables are produced by the next lemma.

\begin{lemma}\label{le:example of real stable} 
	If
	$A_1,\dots, A_m\in M_d(\bbC)$ are  positive matrices, then the polynomial
	\begin{equation}\label{eq:definition of q}
	q(z, z_1,\dots, z_m)=\det(zI_d + \sum_{i=1}^{m}z_i A_i)
	\end{equation}
	is real stable.
\end{lemma}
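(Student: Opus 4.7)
My plan has two separate pieces: verify that $q$ has real coefficients, and verify that $q$ has no zeros when every variable lies in $\bbH$.

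First I would check that the coefficients of $q$ are real. For any real values of $z, z_1, \dots, z_m$, the matrix $zI_d + \sum_i z_i A_i$ is Hermitian (the $A_i$ are Hermitian because they are positive, and real linear combinations of Hermitian matrices plus a real multiple of the identity stay Hermitian). Its determinant is therefore real. Since $q$ is a polynomial in $m+1$ variables that takes real values on all of $\bbR^{m+1}$, comparing Taylor coefficients at the origin forces all coefficients of $q$ to be real.

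Next I would show that $q(z, z_1, \dots, z_m)\ne 0$ whenever $z, z_1, \dots, z_m \in \bbH$. Write $z = a + ib$ with $b>0$ and $z_j = a_j + ib_j$ with $b_j > 0$. Then
\[
zI_d + \sum_{j=1}^{m} z_j A_j = H + iK,
\]
where $H = aI_d + \sum_j a_j A_j$ is Hermitian and $K = bI_d + \sum_j b_j A_j$ is a sum of a positive multiple of the identity and a nonnegative combination of positive matrices, hence strictly positive definite. The lemma thus reduces to the standard fact that $H + iK$ is invertible whenever $H$ is Hermitian and $K$ is positive definite: if $(H+iK)v=0$ for some $v\ne 0$, then taking the inner product with $v$ gives $\langle Hv,v\rangle + i\langle Kv,v\rangle = 0$, and since both inner products are real this forces $\langle Kv,v\rangle = 0$, contradicting the positive definiteness of $K$.

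There is really no hard step here; the only thing to be careful about is the initial reality argument, since the $A_i$ may themselves have nonreal entries. The reduction to ``Hermitian plus $i\cdot$(positive definite) is invertible'' is the whole content of the nontrivial half.
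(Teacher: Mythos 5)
Your proof is correct and follows essentially the same route as the paper: both establish reality of the coefficients by using that the matrix is Hermitian for real arguments, and both rule out zeros in $\bbH^{m+1}$ by pairing a putative kernel vector against itself and inspecting the imaginary part. Your repackaging of the matrix as $H+iK$ with $K$ positive definite is a cosmetic rearrangement of the paper's direct computation $\Im z\,\|\xi\|^2+\sum_i \Im z_i\,\langle A_i\xi,\xi\rangle>0$, not a different idea.
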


\begin{proof}
	It is immediate from the definition that 
	\[
	q(\bar z, \bar z_1, \dots, \bar z_m)=\overline{q(z, z_1,\dots, z_m)},
	\]
	whence the coefficients of $q$ are real.
	
	Assume that $ q(z, z_1,\dots, z_m)=0 $, and $\Im z, \Im z_i>0$. Since $ zI_d + \sum_{i=1}^{m}z_i A_i $ is not invertible, there exists $\xi\in\bbC^d$, $\xi\not=0$, such that
	\[
	0=\< (zI_d + \sum_{i=1}^{m}z_i A_i)\xi, \xi\>=z\|\xi\|^2 +\sum_{i=1}^{m} z_i\<A_i\xi, \xi\>, 
	\]
	and so
	\[
	0=\Im z\|\xi\|^2 +\sum_{i=1}^{m}\Im z_i\<A_i\xi, \xi\> .
	\]
	This is a contradiction, since $\Im z\|\xi\|^2 >0$ and $\Im z_i\<A_i\xi, \xi\>\ge 0$ for all $i$.
\end{proof}

The next theorem gives the basic properties of real stable polynomials. Denote, for simplicity, by $\partial_i$ the partial derivative $ \frac{\partial}{\partial z_i} $. 

\begin{theorem}\label{th:basic properties of stable}
Suppose $p$ is a real stable polynomial.

\begin{itemize}
	\item[(i)] If $m>1$ and $t\in\bbR$, then $p(z_1,\dots, z_{m-1}, t)$ is either real stable or identically zero.
	
	\item[ (ii)] If $t\in \bbR$, then $(1+t\partial_m)p$ is real stable.
\end{itemize}
\end{theorem}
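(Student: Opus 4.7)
Both parts reduce to verifying the non-vanishing condition on the relevant upper half-space; reality of coefficients is automatic since $p$ has real coefficients, $t$ is real, and a constant-coefficient real differential operator preserves realness.

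For (i), the plan is to approach the real value $z_m=t$ from inside $\bbH$. Set $p_\epsilon(z_1,\dots,z_{m-1}):= p(z_1,\dots,z_{m-1},t+i\epsilon)$ for $\epsilon>0$. If $p_\epsilon$ had a zero at some $(\zeta_1,\dots,\zeta_{m-1})\in\bbH^{m-1}$, then $p$ would vanish at $(\zeta_1,\dots,\zeta_{m-1},t+i\epsilon)\in\bbH^m$, contradicting real stability; hence every $p_\epsilon$ is zero-free on $\bbH^{m-1}$. As $\epsilon\downarrow 0$ the polynomials $p_\epsilon$ converge coefficient by coefficient (and hence uniformly on compacts) to $p(z_1,\dots,z_{m-1},t)$, and Corollary~\ref{co:Herglotz and other's} yields exactly the required dichotomy: the limit is either identically zero or has no zero in $\bbH^{m-1}$.

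For (ii), the plan is to freeze the first $m-1$ variables and reduce to a one-variable logarithmic-derivative computation. Assume $t\neq 0$ (otherwise the claim is trivial). Fix $(z_1,\dots,z_{m-1})\in\bbH^{m-1}$ and consider the one-variable polynomial $q(w):= p(z_1,\dots,z_{m-1},w)$. Two preliminary facts are needed: $q\not\equiv 0$ (else $p(z_1,\dots,z_{m-1},i)=0$ with $(z_1,\dots,z_{m-1},i)\in\bbH^m$, contradicting real stability), and $q$ has no zero in $\bbH$ (same reason). Factor $q(w)=c\prod_k(w-\alpha_k)$ with $c\neq 0$ and $\Im\alpha_k\le 0$ for every $k$. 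Suppose, for contradiction, that
\[
\bigl((1+t\partial_m)p\bigr)(z_1,\dots,z_{m-1},w)=q(w)+tq'(w)=0
\]
for some $w\in\bbH$. Since $q(w)\neq 0$, this rewrites as $q'(w)/q(w)=-1/t\in\bbR$. However, the partial-fraction expansion
\[
\frac{q'(w)}{q(w)}=\sum_k\frac{1}{w-\alpha_k}
\]
has strictly negative imaginary part at $w$: each term satisfies $\Im(1/(w-\alpha_k))=-(\Im w-\Im\alpha_k)/|w-\alpha_k|^2<0$ because $\Im w>0\ge\Im\alpha_k$. This contradicts the reality of $-1/t$.

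The main obstacle in both parts is really bookkeeping rather than substance: in (i) one must remember that the dichotomy of Corollary~\ref{co:Herglotz and other's} genuinely permits identical vanishing, which is why the statement includes that alternative; in (ii) one must rule out $q\equiv 0$ before writing $q'/q$, and this is precisely where the hypothesis on $p$ is invoked a second time. Once these degeneracies are dispatched, both arguments collapse to short verifications.
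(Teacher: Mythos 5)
Your proof is correct and follows essentially the same route as the paper: for (i), approach $t$ through $t+i\epsilon$ and invoke the Hurwitz-type Corollary~\ref{co:Herglotz and other's}; for (ii), freeze the first $m-1$ variables, factor the resulting one-variable polynomial over the closed lower half-plane, and derive a contradiction from the sign of the imaginary part of the logarithmic derivative. The only differences are cosmetic (continuous $\epsilon\downarrow0$ versus a sequence $i/n$, and phrasing the final contradiction as ``$q'/q$ is not real'' rather than taking the imaginary part of $1+t\,q'/q=0$).
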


\begin{proof}
	(i) Obviously $p(z_1,\dots, z_{m-1}, t)$ has real coefficients. Suppose it is not identically zero. If $\Im w>0$ is fixed, then the polynomial $p(z_1,\dots, z_{m-1}, w)$ is real stable by definition. Therefore all polynomials $p(z_1,\dots, z_{m-1}, t+\frac{i}{n})$ (for $n\in\bbN$) are real stable. We let then $n\to\infty$ and apply Corollary~\ref{co:Herglotz and other's} to $D=\bbH^m$ to obtain the desired result.
	
	(ii) We may assume $t\not=0$ (otherwise there is nothing to prove). Suppose $(1+t\partial_m)p(z_1,\dots, z_m)=0$ for some $(z_1, \dots, z_m)\in \bbH^m$. Since $p$ is real stable, $p(z_1,\dots, z_m)\not=0$. The one-variable polynomial $q(z):=p(z_1,\dots,z_{m-1}, z)$ has no roots with positive imaginary part (in particular, $q(z_m)\not=0$), so we may write
	\[
	q(z)=c\prod_{i=1}^{n}(z-w_i), \qquad \Im w_i\le 0.
	\]
	Therefore
	\[
	0= (1+t\partial_m)p(z_1,\dots, z_m) = (q+tq')(z_m)=
	q(z_m)\left(1+t\frac{q'(z_m)}{q(z_m)} \right),
	\]
	and, since $q(z_m)\not=0$,
	\[
	0=1+t\sum_{i=1}^{n} \frac{1}{z_m-w_i}=1+t\sum_{i=1}^{n} \frac{\overline{z_m-w_i}}{|z_m-w_i|^2}.
	\]
	Taking the imaginary part, we obtain
	\[
	t \sum_{i=1}^{n} \frac{\Im w_i-\Im z_m}{|z_m-w_i|^2}=0
	\]
	which is a contradiction, since $t\not=0$ and $\Im w_i-\Im z_m<0$ for all $i$.
\end{proof}

We will also need a lemma that uses a standard result in algebraic geometry, namely B\'ezout's Theorem (which can be found in any standard text).

\begin{lemma}\label{le:bezout and things}
	Suppose $p(z,w)$ is a nonconstant polynomial in two variables, of degree $n$ in $w$, which is irreducible over $\bbR$.
		 There is a finite set $F\in\bbC$ such that, if $p(z_0,w_0)=0$ and $z_0\not\in F$, then:
		 \begin{enumerate}
		 	\item the equation $p(z_0,w)=0$ has $n$ distinct solutions;
		 	\item for each of these solutions $(z_0,w_0)$ we have
$\frac{\partial p}{\partial w}(z_0, w)\not=0$.
		 \end{enumerate} 
	
\end{lemma}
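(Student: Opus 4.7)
My plan is to let $F=F_1\cup F_2$, where $F_1=\{z\in\bbC:a_n(z)=0\}$ with $a_n(z)$ the leading coefficient of $p$ viewed as a polynomial in $w$, and $F_2$ is the set of $z$-coordinates of the common zeros of $p$ and $\frac{\partial p}{\partial w}$ in $\bbC^2$. Granting that both sets are finite, the rest is immediate: for $z_0\notin F_1$ the polynomial $p(z_0,w)$ has degree exactly $n$ in $w$, and for $z_0\notin F_2$ every root $w_0$ of $p(z_0,w)$ satisfies $\frac{\partial p}{\partial w}(z_0,w_0)\neq 0$, so $w_0$ is simple. Thus $p(z_0,w)=0$ has $n$ distinct roots, proving (1), while (2) is just the definition of $F_2$.

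Finiteness of $F_1$ is obvious since $a_n\not\equiv 0$. Finiteness of $F_2$ is where B\'ezout's theorem is needed: applied to the plane curves defined by $p$ and $\frac{\partial p}{\partial w}$ in $\bbC^2$, it provides finitely many intersection points, provided these two polynomials share no common factor of positive degree in $\bbC[z,w]$. The whole argument therefore reduces to verifying this coprimality.

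To do so, I would suppose that $h\in\bbC[z,w]$ is a nonconstant irreducible common factor of $p$ and $\frac{\partial p}{\partial w}$. Since $p\in\bbR[z,w]$, the conjugate polynomial $\bar h$ also divides $p$. If $h$ and $\bar h$ are associates over $\bbC$, then a suitable scalar multiple of $h$ is a nonconstant real divisor of $p$; if they are not associates, then $h\bar h\in\bbR[z,w]$ is itself a nonconstant real divisor of $p$. Irreducibility of $p$ over $\bbR$ forces the resulting real divisor to equal $p$ up to a constant. A short calculation, using that $\frac{\partial h}{\partial w}$ has strictly smaller $w$-degree than $h$ (and, in the second case, that $h$ and $\bar h$ are distinct irreducibles), then contradicts $h\mid\frac{\partial p}{\partial w}$; the degenerate subcase $h\in\bbC[z]$ is ruled out by $\deg_w p=n\geq 1$.

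The main obstacle is precisely this coprimality step. Because irreducibility over $\bbR$ does not entail irreducibility over $\bbC$ (witness $z^2+w^2$), one cannot transport the hypothesis directly and instead must perform the conjugation-symmetry bookkeeping sketched above. Once coprimality is in hand, B\'ezout together with the routine translation of ``$p(z_0,w)$ has no repeated root'' into ``$p$ and $\frac{\partial p}{\partial w}$ share no zero with first coordinate $z_0$'' finishes the proof.
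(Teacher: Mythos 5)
Your proposal follows the paper's proof exactly in structure: $F=F_1\cup F_2$ with $F_1$ the zeros of the leading $w$-coefficient and $F_2$ the first-coordinate projections of the common zeros of $p$ and $\partial p/\partial w$, whose finiteness is obtained from B\'ezout once $p$ and $\partial p/\partial w$ are known to be coprime over $\bbC$. The only difference is in that last step: the paper observes that irreducibility over $\bbR$ makes $p$ and $\partial p/\partial w$ coprime over $\bbR$ (since $\deg_w \partial p/\partial w<\deg_w p$) and then simply asserts that coprimality passes from $\bbR[z,w]$ to $\bbC[z,w]$, whereas you prove coprimality over $\bbC$ directly by the conjugation argument with $h$ and $\bar h$ --- a valid and slightly more self-contained way to fill in the same step, and your caution is well placed since irreducibility over $\bbR$ indeed need not survive the extension to $\bbC$.
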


\begin{proof} First, if $p(z,w)=q(z)w^n+\dots$, then the roots of $q$ form a finite set $F_1$.
	
	Secondly, 	if $p$ is irreducible, then $p$ and $\frac{\partial p}{\partial w}$ are coprime over $\bbR$, and hence also over $\bbC$.  B\'ezout's Theorem in algebraic geometry states that  two curves defined by coprime equations have only a finite number of common points, so this is true about the sets defined by $p(z,w)=0$ and $\frac{\partial p}{\partial w}(z,w)=0$. Let $F_2$ be the set of the projections of these points onto the first coordinate. The set $F=F_1\cup F_2$ has the required properties.
\end{proof}

\subsection{The barrier function}
Our eventual purpose in this subsection is to obtain  estimates on the roots of real stable polynomials; more precisely, we want to show that a restriction on the roots of a real stable polynomial $p$ may imply a restriction on the roots of $(1-\partial_i)p$ (which is also real stable by Theorem~\ref{th:basic properties of stable}). 

We will often use the restriction of a polynomial in $m$ complex variables to $\bbR^m\subset \bbC^m$. To make things easier to follow, we will be consistent in this subsection with the following notation: $z,w$ will belong to $\bbC^m$ (and corresponding subscripted letters in $\bbC$), while $x,y,s,t$ will be in $\bbR^m$ (and corresponding subscripted letters in $\bbR$). If $x=(x_1, \dots, x_m)\in \bbR^m$, then $\{ y\ge  x \}$ will denote $\{ y=(y_1, \dots, y_m)\in \bbR^m : y_i\ge x_i \text{ for all }i=1, \dots, m \}$. 

The main tool is a certain function associated to $p$ called  the \emph{barrier function}, whose one-dimensional version has already been met in Lemma~\ref{le:properties of Phi in one variable}. It is defined wherever $p\not=0$ by 
 $\Phi_p^i=\frac{\partial_i p}{p}$; if $p(x)>0$ it can also be written as $\Phi_p^i(x)=\partial_i(\log p)(x)$. The argument of the barrier function will always actually be in $\bbR^m$.

The connection of the barrier function with our problem is given by the simple observation that if $p(x)\not=0$ and $(1-\partial_i)p(x)=0$, then $\Phi^i_p(x)=1$. So, in particular, a set on which  $0\le\Phi^i_p<1$ does not contain zeros of  $(1-\partial_i)p$. To determine such sets, the basic result is the next lemma, which is a multidimensional extension of Lemma~\ref{le:properties of Phi in one variable}.

\begin{lemma}\label{le:basic conditions on Phi}
	Suppose $x\in\bbR^m$, and $p(z_1,\dots, z_m)$ is a real stable polynomial that has no roots in $\{y\ge x \}$, then
	\[
	(-1)^k \frac{\partial^k}{\partial z^k_j} \Phi^i_p(x')\ge 0
	\]
	for any $k\ge 0$, $1\le i,j\le m$, and $x'\ge x$.
	
	In particular, if $e_j$ is one of the canonical basis vectors in $\bbC^m$, then $t\mapsto \Phi^i_p(x+t e_j)$ is positive, nonincreasing and convex on $[0,\infty]$.
\end{lemma}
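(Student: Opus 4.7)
My plan is to reduce the lemma to a one-variable Pick (Nevanlinna) function argument, which uniformly handles both cases $i=j$ and $i\ne j$. Fix $x'\ge x$ and freeze every variable of $p$ at $x'_k$ except $z_j$; write $\tilde p(w)$ and $\tilde q(w)$ for the resulting one-variable restrictions of $p$ and $\partial_i p$. Theorem~\ref{th:basic properties of stable}(i) shows $\tilde p$ is real stable (hence all its zeros are real), and the hypothesis that $p$ has no zeros on $\{y\ge x\}$ forces these zeros to lie in $(-\infty,x_j)$. Setting $F(w):=\tilde q(w)/\tilde p(w)$, the identity $F(x'_j+t)=\Phi_p^i(x'+t e_j)$ turns the lemma into
\[
(-1)^k F^{(k)}(x'_j)\ge 0\qquad(k\ge 0).
\]

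The crux is to show that $-F$ is a Pick function, i.e.\ $\Im F(w)\le 0$ whenever $\Im w>0$. For this I also free $z_i$ and freeze the remaining variables at $x'_k$, obtaining by repeated use of Theorem~\ref{th:basic properties of stable}(i) a real-stable two-variable polynomial $h(z_i,w)$. For $\Im w>0$, real stability of $p$ forces every root $\alpha_l(w)$ of $h(\cdot,w)$ in $z_i$ to satisfy $\Im\alpha_l(w)\le 0$, since otherwise $p$ would vanish at a point of $\mathbb{H}^m$. Factoring $h$ as a polynomial in $z_i$ and evaluating at $z_i=x'_i$ yields
\[
F(w)=\sum_l\frac{1}{x'_i-\alpha_l(w)},
\]
and each summand has nonpositive imaginary part because $\Im(x'_i-\alpha_l(w))\ge 0$.

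Being a rational Pick function whose only singularities are real poles in $(-\infty,x'_j)$ and which is bounded at infinity (since $\deg\tilde q\le\deg\tilde p$), $F$ admits a partial fraction expansion
\[
F(w)=C+\sum_l\frac{\gamma_l}{w-s_l},\qquad \gamma_l\ge 0,\ s_l<x'_j,
\]
where nonnegativity of the residues $\gamma_l$ (and the simplicity of the poles after any cancellation with zeros of $\tilde q$) is the standard hallmark of rational Nevanlinna functions. Differentiating $k\ge 1$ times and setting $w=x'_j$ gives $(-1)^k F^{(k)}(x'_j)=k!\sum_l\gamma_l/(x'_j-s_l)^{k+1}\ge 0$, while $k=0$ follows directly from the displayed sum formula and $\alpha_l(x'_j)<x_i\le x'_i$. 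Since $x'\ge x$ was arbitrary the inequality holds throughout $\{y\ge x\}$, and the final assertion about $t\mapsto\Phi_p^i(x+t e_j)$ is just the specialization to the line through $x$ in direction $e_j$ (taking $k=0,1,2$ yields positivity, monotonicity, convexity).

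The principal obstacle is the Pick-function step: translating the multivariate stability of $p$ into a scalar Nevanlinna condition requires working in two variables simultaneously and controlling the imaginary parts of the algebraic functions $\alpha_l(w)$. Once that is in place, the sign of every derivative of $\Phi_p^i$ in the direction $e_j$ falls out of the partial fraction formula, and Lemma~\ref{le:properties of Phi in one variable} is recovered as the special case $i=j$ (where $\tilde q=\tilde p'$ and the residues are identically $1$).
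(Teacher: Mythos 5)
Your proposal is correct, and it takes a genuinely different route from the paper's proof. The paper reduces to $m=2$, $i\neq j$, then applies B\'ezout's theorem (Lemma~\ref{le:bezout and things}) to work outside a finite exceptional set of $t$, uses the implicit function theorem to show that each root $\rho_i(t)$ of the slice $p(t,\cdot)$ is nonincreasing, and deduces the sign of $(-1)^k\partial_{z_2}^k\log p(t,x_2)$ by a direct derivative computation. You instead package the directional restriction of $\Phi_p^i$ as a single rational function $F$ of one variable and observe that $-F$ is a Pick function; the Nevanlinna representation of rational Pick functions (real simple poles, residues of a fixed sign, at most a linear term with the right sign) then yields the sign of every derivative at once, with $k=0$ read off the explicit sum $\sum_l (x_i'-\alpha_l(w))^{-1}$. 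Both proofs share the same kernel --- stability forces $\Im\alpha_l(w)\le 0$ for $\Im w>0$, hence the sign condition on $F$ --- but your formulation treats $i=j$ and $i\neq j$ uniformly and avoids B\'ezout and the genericity reduction entirely, at the cost of invoking the residue-sign fact for rational Pick functions, which you call ``standard'' but should be spelled out (it is a two-line local argument at each real pole) if the proof is to be self-contained. Two small inaccuracies that do not affect correctness: (a) $\deg\tilde q\le\deg\tilde p$ is not automatic, since specialization at $x'$ can kill the top coefficient of $\tilde p$ but not of $\tilde q$; however the Nevanlinna form at worst contributes a term $-b'w$ with $b'\ge 0$, which has the correct sign for $k=1$ and vanishes for $k\ge 2$, and $b'=0$ in any case because $F(w)>0$ for all real $w\ge x_j$ (not just at $x_j'$) by the same sum formula; (b) the real zeros of $\tilde p$ lie in $(-\infty,x_j)$, which is contained in but in general smaller than $(-\infty,x_j')$ --- harmless, just worth stating precisely.
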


\begin{proof}
	 The assertion reduces to Lemma~\ref{le:properties of Phi in one variable} for $m=1$ or for $k=0$; and also for $i=j$, since then  fixing all  variables except the $i$th reduces the problem to the one variable case. 
	 
	 In the general case,  it is enough to do it for $y=x$, since if $p$ has no roots in $\{y\ge x \}$, then it has no roots in $\{y\ge x'\}$ for all $x'\ge x$. By fixing all variables except $i$ and $j$, we may assume that $m=2$, $i=1$, $j=2$, $k\ge 1$. Moreover,  we may also assume that $p>0$ on $\{y\ge x \}$ (otherwise we work with $-p$, since $\Phi^i_p=\Phi^i_{-p}$).

	 So we have to prove that, if $p(z_1, z_2)$ is a real stable polynomial which has no zeros in $\{y_1\ge x_1, \ y_2\ge x_2 \}$, then
	 \[
	 0\le (-1)^k \frac{\partial^k}{\partial z^k_2} \Phi^1_p(x_1, x_2)
	 = (-1)^k \frac{\partial^k}{\partial z^k_2} \left(\frac{\partial}{\partial z_1}\log p \right) (x_1, x_2)
	 =\frac{\partial}{\partial z_1} \left((-1)^k \frac{\partial^k}{\partial z^k_2}\log p   \right) (x_1, x_2).
	 \]
	 
	 We will in fact prove that the map
	 \[
	 t\mapsto (-1)^k \frac{\partial^k}{\partial z^k_2}\log p (t, x_2)
	 \]
	 is increasing for $t\ge x_1$. It is enough to achieve this for $p$ irreducible over $\bbR$, since, if $p=p_1 p_2$ is real stable and has no roots in $\{y\ge x \}$, then the same is true for $p_1$ and $p_2$, and obviously
	 \[
	 (-1)^k \frac{\partial^k}{\partial z^k_2}\log p (t, x_2)=(-1)^k \frac{\partial^k}{\partial z^k_2}\log p_1 (t, x_2)+(-1)^k \frac{\partial^k}{\partial z^k_2}\log p_2 (t, x_2).
	 \]

	 Suppose then that $p$ is irreducible. 
	 For $t\ge x_1$ fixed, the polynomial $p(t,z)$ is real stable, and thus has all roots real; denote them, as in Section~\ref{se:univ poly}, by $\rho_1(t)\ge \dots\ge \rho_n(t)$. 
	 
	 Applying to $p$ Lemma~\ref{le:bezout and things}, take $t\ge x_1$  that does not belong to the finite set $F$ therein. The functions $\rho_i(t)$ are therefore differentiable in $t$, and we have 
	 \begin{equation}\label{eq:formula for p with roots}
	 p(t, z)=c(t)\prod_{i=1}^{n}(z-\rho_i(t))
	 \end{equation}
	  Therefore
	 \begin{equation}\label{eq:the function that has to be increasing}
	 \begin{split}
	  \left((-1)^k \frac{\partial^k}{\partial z^k_2}\log p   \right) ((-1)^k \frac{\partial^k}{\partial z^k_2}\log p) (t, x_2)&=
	  (-1)^k \frac{\partial^k}{\partial z^k_2}
	  \left(\sum_{i=1}^{n} \log(z-\rho_i(t))\right)\Big|_{z=x_2} \\ 
	  &=- \sum_{i=1}^{n} \frac{(k-1)!}{(x_2-\rho_i(t))^k}.
	 \end{split}
	 \end{equation}
	  If $t\ge x_1$, we cannot have $\rho_i(t)\ge x_2$ since then $(t, \rho_i(t))$ would be a root of $p$ in $\{y\ge x \}$, contrary to the assumption. Thus $x_2-\rho_i(t)>0$, and in order to show that the function in~\eqref{eq:the function that has to be increasing} is increasing, it is enough to show that $t\mapsto \rho_i(t)$ is decreasing for $t\ge x_1$ and all $i$.
	  
Now all $\rho_i$s are differentiable for $t\ge x_1$, $t\notin F$. To show that they are decreasing, it is enough to show that $\rho_i'(t)\le 0$ for such $t$. Suppose then that there exists $i\in\{1,\dots, n \}$ and $t\ge x_1$ such that $\rho_i'(t)>0$; let $s=\rho_i(t)$. 
	 Since $\frac{\partial p}{\partial z_2} (t, s)\not=0$, we may apply the (complex) implicit function theorem in a neighborhood of $(t,s)$ (in $\bbC^2$). We obtain that the solutions of $p(z_1, z_2)=0$ therein are of the form $(z_1,g(z_1))$ for some locally defined analytic function of one variable $g$, which by analytic continuation has to be an extension of $\rho$ to a complex neighborhood of $t$. So $g'(t)=\rho'_i(t)$, and in the neighborhood of $t$ we have
	 \[
	 g(z_1)= t+\rho'_i(t) (z_1-t)+O(|z_1-t|^2).
	 \]
	 If $\Im z_1>0$ and small, one  also has $\Im g(z_1)>0$. We obtain thus the zero $(z_1, g(z_1))$ of $p$ in $\bbH^2$, contradicting the real stability of $p$. This ends the proof of the lemma.
\end{proof}

\begin{corollary}\label{co:Phi(y) le Phi(x)}
	Suppose $x\in\bbR^m$, and $p$ is a real stable polynomial, without zeros in $ \{y\ge x\} $. Then $\Phi^j_p(y)\le \Phi^j_p(x)$ for any $y\ge x$ and $j=1, \dots, m$.
\end{corollary}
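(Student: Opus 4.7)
The plan is to deduce the inequality from the monotonicity statement built into Lemma~\ref{le:basic conditions on Phi}, by moving from $x$ to $y$ one coordinate at a time.

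First, I note the key one-step fact: for any $x'\in\bbR^m$ such that $p$ has no zeros in $\{y\ge x'\}$, and for any $i$, the map $t\mapsto \Phi^i_p(x'+t e_k)$ is nonincreasing on $[0,\infty)$ for every $k$. This is just the case $k=1$ of Lemma~\ref{le:basic conditions on Phi} (which says that $(-1)\,\partial_k\Phi^i_p\ge 0$ on $\{y\ge x'\}$), together with the convexity/monotonicity of $t\mapsto \Phi^i_p(x'+te_k)$ stated in the same lemma.

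Now, given $y\ge x$ in $\bbR^m$, interpolate coordinatewise by defining $x^{(0)}=x$ and, for $k=1,\dots,m$,
\[
x^{(k)}=(y_1,\dots,y_k,x_{k+1},\dots,x_m),
\]
so that $x^{(k)}=x^{(k-1)}+(y_k-x_k)e_k$ with $y_k-x_k\ge 0$, and $x^{(m)}=y$. Each $x^{(k-1)}$ satisfies $x^{(k-1)}\ge x$, so $\{y'\ge x^{(k-1)}\}\subset\{y'\ge x\}$, and in particular $p$ has no zeros in $\{y'\ge x^{(k-1)}\}$. Applying the one-step fact with $x'=x^{(k-1)}$, $i=j$, and in the direction $e_k$, we obtain
\[
\Phi^j_p(x^{(k)})\le \Phi^j_p(x^{(k-1)}).
\]
Chaining these inequalities for $k=1,\dots,m$ yields $\Phi^j_p(y)=\Phi^j_p(x^{(m)})\le \Phi^j_p(x^{(0)})=\Phi^j_p(x)$, as required.

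There is no real obstacle here once Lemma~\ref{le:basic conditions on Phi} is in hand; the only thing to double-check is the validity of the hypothesis at each intermediate point $x^{(k-1)}$, which is automatic because the ``no zeros'' condition on the upper orthant $\{y'\ge x\}$ is inherited by every smaller orthant $\{y'\ge x^{(k-1)}\}$. The proof thus amounts to the coordinatewise telescoping argument above.
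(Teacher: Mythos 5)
Your proof is correct and is essentially identical to the paper's: both deduce the statement from the monotonicity in Lemma~\ref{le:basic conditions on Phi} and telescope coordinatewise through the intermediate points $(y_1,\dots,y_k,x_{k+1},\dots,x_m)$, using the observation that the ``no zeros'' hypothesis is inherited by each smaller orthant.
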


\begin{proof}
If $p$ has no zeros in  $ \{y\ge x\} $, obviously it has no zeros in  $ \{y\ge x'\} $ for any $x'\ge x$.  Therefore,
	by Lemma~\ref{le:basic conditions on Phi}, the function $t\mapsto \Phi^j_p(x'+te_i)$ is nonincreasing on $[0,\infty)$ for any $i=1, \dots, m$. We have then
	\[
	\Phi^j_p(x_1,\dots, x_m)\ge
		\Phi^j_p(y_1,x_2,\dots, x_m)\ge
		\Phi^j_p(y_1,y_2,x_3,\dots, x_m)\ge
		\dots
		\ge 	\Phi^j_p(y_1,\dots, y_m)\qedhere
	\] 
	\end{proof}

The main monotonicity and convexity properties of $\Phi^i_p$ are put to work in the next lemma to obtain a restriction on the location of zeros of $(1-\partial_j)p$. As noted above, we will use the condition $\Phi^j_p<1$, but in a more precise variant which will lends itself to iteration.

\begin{lemma}\label{le:applying 1-partial to p]}
	Let $x\in\bbR^m$, and $p$  a real stable polynomial, without zeros in $ \{y\ge x\} $. Suppose also that 
	\[
	\Phi^j_p(x)+\frac{1}{\delta}\le 1
	\]
	for some $j\in\{1, \dots, m\}$ and $\delta>0$.
	
	Then:
	\begin{itemize}
		\item[(i)] $(1-\partial_j)p$ has no zeros in  $ \{y\ge x\} $.
		
		\item[(ii)] For any $i=1, \dots, m$ we have
		\[
		\Phi^i_{(1-\partial_j)p}(x+\delta e_j) \le \Phi^i_p(x).
		\]
	\end{itemize}
	
\end{lemma}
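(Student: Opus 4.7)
\bigskip

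\noindent\textbf{Proof plan.} Write $q=(1-\partial_j)p$, and note that wherever $p\neq 0$ we have the factorisation
\[
q \;=\; p\,(1-\Phi^j_p).
\]
By Theorem~\ref{th:basic properties of stable}(ii), $q$ is itself real stable. For (i), fix $y\ge x$. Since $p$ has no zeros on $\{y\ge x\}$, Corollary~\ref{co:Phi(y) le Phi(x)} yields $\Phi^j_p(y)\le \Phi^j_p(x)\le 1-\tfrac1\delta<1$; hence $p(y)\ne 0$ and $1-\Phi^j_p(y)>0$, so $q(y)\ne0$.

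For (ii), the plan is to derive an explicit formula for $\Phi^i_q$ and then exploit the convexity of $t\mapsto\Phi^i_p(x+te_j)$ from Lemma~\ref{le:basic conditions on Phi}. Differentiating $q=p-\partial_j p$ and using $\partial_i\partial_j p = \partial_j(p\,\Phi^i_p)= \partial_jp\cdot\Phi^i_p + p\,\partial_j\Phi^i_p$, one finds
\[
\partial_i q \;=\; \Phi^i_p\,(p-\partial_j p) - p\,\partial_j\Phi^i_p \;=\; \Phi^i_p\,q - p\,\partial_j\Phi^i_p,
\]
so that, using $p/q = 1/(1-\Phi^j_p)$,
\[
\Phi^i_q \;=\; \Phi^i_p \;-\; \frac{\partial_j\Phi^i_p}{1-\Phi^j_p}.
\]
This is valid at every point of $\{y\ge x\}$ by (i).

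Now introduce the one-variable auxiliary function $\phi(t):=\Phi^i_p(x+te_j)$, which by Lemma~\ref{le:basic conditions on Phi} is positive, nonincreasing and convex on $[0,\infty)$. Evaluating the boxed formula at $x+\delta e_j$, the desired inequality $\Phi^i_q(x+\delta e_j)\le\Phi^i_p(x)$ becomes
\[
\phi(0)-\phi(\delta) \;\ge\; \frac{-\phi'(\delta)}{1-\Phi^j_p(x+\delta e_j)}.
\]
Convexity of $\phi$ gives the tangent-line bound $\phi(0)\ge\phi(\delta)-\delta\phi'(\delta)$, hence $\phi(0)-\phi(\delta)\ge -\delta\,\phi'(\delta)\ge 0$. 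If $\phi'(\delta)=0$ the inequality is trivial; otherwise dividing by $-\phi'(\delta)>0$ reduces everything to showing
\[
\delta\bigl(1-\Phi^j_p(x+\delta e_j)\bigr)\;\ge\;1,
\]
i.e.\ $\Phi^j_p(x+\delta e_j)\le 1-\tfrac1\delta$. But Corollary~\ref{co:Phi(y) le Phi(x)} gives $\Phi^j_p(x+\delta e_j)\le\Phi^j_p(x)$, and the hypothesis $\Phi^j_p(x)+\tfrac1\delta\le 1$ closes the argument.

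The only delicate step is the formula for $\Phi^i_q$; once it is in hand, both assertions reduce to the monotonicity/convexity already packaged in Lemma~\ref{le:basic conditions on Phi} and Corollary~\ref{co:Phi(y) le Phi(x)}. The clever point, and the reason the ``walk by $\delta e_j$'' appears in the statement, is precisely that convexity allows one to pay the extra factor $1/(1-\Phi^j_p)$ by moving a distance $\delta$ in the $j$-th direction, exactly the distance for which the hypothesis $\Phi^j_p(x)\le 1-1/\delta$ is calibrated.
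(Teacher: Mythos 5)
Your proof is correct and follows essentially the same route as the paper: part (i) via Corollary~\ref{co:Phi(y) le Phi(x)}, the identity $\Phi^i_{(1-\partial_j)p}=\Phi^i_p-\partial_j\Phi^i_p/(1-\Phi^j_p)$ (the paper derives it by differentiating $\log[(1-\partial_j)p]=\log p+\log(1-\Phi^j_p)$ and writes it with $\partial_i\Phi^j_p$, which equals your $\partial_j\Phi^i_p$ by symmetry of mixed partials), and then the same three inputs — monotonicity in $j$, nonnegativity of $-\partial_j\Phi^i_p$, and convexity of $t\mapsto\Phi^i_p(x+te_j)$. The only cosmetic difference is that the paper chains the inequalities directly without dividing by $-\phi'(\delta)$ and hence avoids your (harmless) case split $\phi'(\delta)=0$.
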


\begin{proof}
	By Corollary~\ref{co:Phi(y) le Phi(x)} we have 
	\[
	\frac{\partial_j p(y)}{p(y)}=\Phi(y)\le \Phi(x)\le 1-\frac{1}{\delta}<1,
	\]
	so $\partial_j p(y)\not= p(y)$, or $(1-\partial_j)p(y)\not=0$.
	
	To prove (ii), note first that $(1-\partial_j)p=p(1-\Phi^j_p)$, whence 
	$\log [(1-\partial_j)p]=\log p+ \log(1-\Phi^j_p) $, so, by differentiating,
	\[
	\Phi^i_{(1-\partial_j)p}=\Phi^i_p-\frac{\partial_i \Phi^j_p}{1-\Phi^j_p}.
	\]
	The required inequality becomes then
	\begin{equation}\label{eq:required Phi}
	-\frac{\partial_i\Phi^j_p(x+\delta e_j)}{1-\Phi^j_p(x+\delta e_j)}\le \Phi^i_p(x)- \Phi^i_p(x+\delta e_j).
	\end{equation}
	By Corollary~\ref{co:Phi(y) le Phi(x)} we have
	\[
	\Phi^j_p(x+\delta e_j)\le \Phi^j_p(x)\le 1-\frac{1}{\delta},
	\]
	or
	\[
	\frac{1}{1-\Phi^j_p(x+\delta e_j)}\le \delta.
	\]
	
	Further on, $p$ has no zeros in $\{ y\ge x+\delta e_j \}$, so Lemma~\ref{le:basic conditions on Phi} (applied in $x+\delta e_j$) implies, in particular, that $-\partial_i\Phi^j_p(x+\delta e_j)\ge 0 $, whence
	\[
	-\frac{\partial_i\Phi^j_p(x+\delta e_j)}{1-\Phi^j_p(x+\delta e_j)}\le- \delta \partial_i\Phi^j_p(x+\delta e_j).
	\]
	To prove~\eqref{eq:required Phi}, it is then enough to show that
	\[
	- \delta \partial_i\Phi^j_p(x+\delta e_j) \le \Phi^i_p(x)- \Phi^i_p(x+\delta e_j).
	\]
	Using $\partial_i\Phi^j_p(x+\delta e_j)=\partial_j\Phi^i_p(x+\delta e_j)$, the inequality can be written
	\[
	\Phi^i_p(x+\delta e_j)\le \Phi^i_p(x)+ \delta \partial_j\Phi^i_p(x+\delta e_j).
	\]
	This, however, is an immediate consequence of the convexity of the function $t\mapsto\Phi^i_p(x+t e_j) $, that has been proved in Lemma~\ref{le:basic conditions on Phi}. 
\end{proof}

Finally,  the next theorem is the main result of this section that we will use in the sequel.

\begin{theorem}\label{th:real stable no zeros after 1-partial}
	Let $x\in\bbR^m$, and $p$  a real stable polynomial, without zeros in $ \{y\ge x\} $. Suppose also that 
\[
\Phi^j_p(x_1,\dots, x_m)+\frac{1}{\delta}\le 1
\]
for some $\delta>0$ and $ j=1,\dots, m $. Then
\[
\prod_{i=1}^{m} (1-\partial_i)p
\]
has no zeros in $ \{y\ge x+\tilde{\delta}\} $, where $ \tilde{\delta}:=(\delta,\dots, \delta)\in \bbR^m $.
\end{theorem}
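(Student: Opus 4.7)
The plan is to iterate Lemma~\ref{le:applying 1-partial to p]} a total of $m$ times, applying the operators $(1-\partial_i)$ one at a time and tracking both the region of non-vanishing and the bound on the barrier function. The key observation that makes the iteration possible is conclusion (ii) of that lemma: after applying $(1-\partial_j)$ and shifting by $\delta e_j$, \emph{every} barrier function $\Phi^i$ is still bounded by the original $\Phi^i_p(x)$, which by hypothesis is at most $1-1/\delta$. Thus the hypothesis of Lemma~\ref{le:applying 1-partial to p]} is preserved at each stage, which is exactly what allows the inductive step.

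Concretely, set $q_0=p$ and $q_k=(1-\partial_k)q_{k-1}$ for $k=1,\dots,m$, and let $x^{(k)}:=x+\delta(e_1+\cdots+e_k)$, with $x^{(0)}=x$. I will prove by induction on $k$ the following joint statement:
\begin{itemize}
\item $q_k$ has no zeros in $\{y\ge x^{(k-1)}\}$ (hence, a fortiori, none in $\{y\ge x^{(k)}\}$);
\item $\Phi^i_{q_k}(x^{(k)})\le \Phi^i_p(x)$ for every $i=1,\dots,m$.
\end{itemize}
The base case $k=0$ is immediate from the hypothesis together with Corollary~\ref{co:Phi(y) le Phi(x)}. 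For the inductive step, the second bullet at stage $k-1$ gives $\Phi^k_{q_{k-1}}(x^{(k-1)})+\tfrac1\delta\le 1$, and $q_{k-1}$ has no zeros in $\{y\ge x^{(k-1)}\}$ by the first bullet. Lemma~\ref{le:applying 1-partial to p]} applied to $q_{k-1}$ at the point $x^{(k-1)}$ with $j=k$ then yields, in part (i), that $q_k$ has no zeros in $\{y\ge x^{(k-1)}\}$, and in part (ii) that
\[
\Phi^i_{q_k}(x^{(k)})=\Phi^i_{q_k}(x^{(k-1)}+\delta e_k)\le \Phi^i_{q_{k-1}}(x^{(k-1)})\le \Phi^i_p(x)
\]
for every $i$, closing the induction.

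Taking $k=m$, the first bullet gives that $q_m=\prod_{i=1}^{m}(1-\partial_i)p$ (the partial operators commute, so the product is unambiguous) has no zeros in $\{y\ge x^{(m-1)}\}\supset\{y\ge x+\tilde\delta\}$, which is the desired conclusion. The only subtlety worth watching is that Lemma~\ref{le:applying 1-partial to p]} must be invoked at the shifted point $x^{(k-1)}$ rather than at $x$ itself; but this is handled automatically by the inductive bound on $\Phi^k_{q_{k-1}}(x^{(k-1)})$, which is the whole point of carrying the second bullet through the induction. There is no real obstacle beyond bookkeeping; the substantive work has already been absorbed into Lemma~\ref{le:applying 1-partial to p]}.
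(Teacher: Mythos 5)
Your proof is correct and is precisely the argument the paper has in mind — the paper's proof of this theorem is the one-line remark ``apply Lemma~\ref{le:applying 1-partial to p]} successively to $j=1$ and $x$, then to $j=2$ and $x+\delta e_1$, etc.'', and your induction is the careful expansion of that sentence, with the two bullets correctly identifying the invariants that must be carried forward. The only thing you leave implicit (as does the paper) is that each $q_k$ remains real stable, which is needed to invoke Lemma~\ref{le:applying 1-partial to p]} at the next stage; this follows at once from Theorem~\ref{th:basic properties of stable}(ii) with $t=-1$.
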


\begin{proof}
	 The proof follows by applying  Lemma~\ref{le:applying 1-partial to p]} successively to $j=1$ and $x$, then to $j=2$ and $x+\delta e_1$, etc. 
\end{proof}

\section{Characteristic and mixed characteristic polynomials}\label{se:mixed char}

\subsection{Mixed characteristic polynomial}

We intend now to apply the results of Section~\ref{se:real stable} to polynomials related to matrices. Our final goal is to estimates eigenvalues; that is, roots of the characteristic polynomial. But we will first consider another polynomial, attached to a tuple of matrices. 

\begin{definition}\label{de:mixed char}
	If $A_1,\dots, A_m\in M_d(\bbC)$, then the \emph{mixed characteristic polynomial} of the matrices $A_i$ is defined by the formula
\begin{equation}\label{eq:def of mu}
\mu[A_1,\dots, A_m](z )=
\prod_{i=1}^{m} (1-\partial_i) \det(z  I_d+\sum_{i=1}^{m}z_i A_i) \Big|_{z_1=\dots=z_m=0}.
\end{equation}
\end{definition}

It is easily seen that if we fix $m-1$ of the matrices $A_1, \dots, A_m$, then  $ \mu[A_1,\dots, A_m](z ) $ is of degree 1 in the entries of the remaining matrix. Indeed, if we develop the determinant that enters~\eqref{eq:def of mu}, then any term that contains a product of, say, $k$ entries of $A_j$ has also the factor $z_j^{k}$. If we apply $(1-\partial_j)$, we are left with  $z_j^{k-1}$, and if $k\ge 2$ this terms becomes 0 if $z_j=0$.

\begin{example}
	For one or two matrices we have
	\[
	\begin{split}
\mu[A_1](z)&=z^d-z^{d-1}\tr A_1\quad\text{ if }m=1,\\
	\mu[A_1, A_2](z)&= z^d-z^{d-1}(\tr A_1+\tr A_2) +z^{d-2} (\tr A_1 \tr A_2- \tr(A_1A_2)) \quad\text{ if }m=2.
	\end{split}
	\]

In the general case, the coefficients of $ \mu[A_1,\dots, A_m](z ) $ are certain expressions in the traces of monomials in $A_1, \dots, A_m$ that are well known in the invariant theory of matrices (see~\cite{P}). 
\end{example}

The results in Section~\ref{se:real stable} have consequences for the mixed characteristic polynomials.

\begin{theorem}\label{th:mixed has real roots}
	Suppose $A_1,\dots, A_m\in M_d(\bbC)$ are positive matrices. Then $\mu[A_1, \dots, A_m](z )$ has only real roots.
\end{theorem}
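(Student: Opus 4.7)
The plan is to obtain $\mu[A_1,\dots,A_m](z)$ as the restriction to one variable of a real stable polynomial, and then invoke the fact that real stable polynomials in one variable have only real roots (this is just the definition in the one-variable case).

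First, I would start from the polynomial
\[
q(z,z_1,\dots,z_m)=\det\Bigl(zI_d+\sum_{i=1}^m z_i A_i\Bigr),
\]
which is real stable by Lemma~\ref{le:example of real stable}, since the $A_i$ are positive. Next, I would apply Theorem~\ref{th:basic properties of stable}(ii) repeatedly with $t=-1$: each operator $(1-\partial_i)$ preserves real stability, so
\[
\tilde q(z,z_1,\dots,z_m):=\prod_{i=1}^m(1-\partial_i)\,q(z,z_1,\dots,z_m)
\]
is again real stable. Then I would apply Theorem~\ref{th:basic properties of stable}(i) iteratively, substituting the real value $z_i=0$ one variable at a time. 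At each step the outcome is either a real stable polynomial in the remaining variables, or identically zero; after all $m$ substitutions we end up with
\[
\mu[A_1,\dots,A_m](z)=\tilde q(z,0,\dots,0).
\]

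To close the argument I need to exclude the possibility that this one-variable polynomial is identically zero. This is where the observation following Definition~\ref{de:mixed char} helps: as a polynomial in the entries of each $A_j$, $\mu$ has degree one in those entries, and more importantly, computing the coefficient of $z^d$ in $\mu$ uses only the constant term of $q$ in $(z_1,\dots,z_m)$, which is $\det(zI_d)=z^d$. The operators $(1-\partial_i)$ do not touch $z$, so the coefficient of $z^d$ in $\mu$ equals $1$, and $\mu$ is monic of degree $d$. Hence $\mu\not\equiv 0$, so the iterated application of Theorem~\ref{th:basic properties of stable}(i) produces a genuine real stable polynomial in the single variable $z$. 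By definition, such a polynomial has real coefficients and all its roots real, which is exactly the claim.

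The entire argument is straightforward modulo the machinery already developed; there is no real obstacle, only bookkeeping. The only point requiring a moment of attention is the non-vanishing step, because Theorem~\ref{th:basic properties of stable}(i) a priori allows the substitution to collapse the polynomial to zero, and one must check that this does not happen here --- which is secured by the explicit identification of the leading coefficient.
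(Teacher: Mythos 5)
Your proof is correct and takes essentially the same route as the paper's: start from the real stability of $q$ given by Lemma~\ref{le:example of real stable}, apply the $(1-\partial_i)$ operators using Theorem~\ref{th:basic properties of stable}(ii), then restrict to $z_i=0$ using Theorem~\ref{th:basic properties of stable}(i). The one thing you add that the paper leaves implicit is the check that $\mu\not\equiv 0$ (Theorem~\ref{th:basic properties of stable}(i) in principle allows a restriction to collapse to zero). Your leading-coefficient argument for this is right, and can be made fully airtight by noting that $\det(zI_d+\sum z_iA_i)$ is homogeneous of degree $d$ in $(z,z_1,\dots,z_m)$, so the coefficient of any monomial $z_1^{\alpha_1}\cdots z_m^{\alpha_m}$ has degree $d-|\alpha|$ in $z$; hence after applying $\prod_i(1-\partial_i)$ and setting $z_i=0$, the coefficient of $z^d$ comes only from the $\alpha=0$ term, which is $z^d$, so $\mu$ is monic of degree $d$. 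This is a small but genuine improvement in rigor over the paper's one-line proof, not a different approach.
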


\begin{proof}
	We have seen in Lemma~\ref{le:example of real stable} that the polynomial $q$ defined by~\eqref{eq:definition of q} is real stable. But $\mu[A_1,\dots, A_m]$ is obtained from $q$ by first applying $(1-\partial_i)$ for $ i=1,\dots, m $ and then specializing to $ z_1=\dots=z_m=0 $. By Theorem~\ref{th:basic properties of stable}, these operations preserve the real stable character. So  $\mu[A_1,\dots, A_m]$ is a real stable polynomial of one variable, which means exactly that it has real roots.
\end{proof}

Remember   Jacobi's formula for the derivative of the determinant of an invertible matrix:
\begin{equation}\label{eq:jacobi}
\frac{(\det M(t))'}{\det M(t)}=\tr\left(M(t)^{-1}M'(t)\right).
\end{equation}

\begin{theorem}\label{th:sum=1, trace small}
	Suppose $A_1,\dots, A_m\in M_d(\bbC)$ are positive matrices, such that $ \sum_{i=1}^{m}A_i=I_d $ and $\tr A_i\le \epsilon$ for each $i=1,\dots, m$. Then any root of $ \mu[A_1,\dots, A_m] $ is smaller than $ (1+\sqrt{\epsilon})^2 $.
\end{theorem}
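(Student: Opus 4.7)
The plan is to show that for every real $t \ge (1+\sqrt{\epsilon})^2$ one has $\mu[A_1,\dots,A_m](t)\neq 0$; combined with Theorem~\ref{th:mixed has real roots}, this yields the conclusion. The tool is Theorem~\ref{th:real stable no zeros after 1-partial}, applied to the polynomial
\[
p(z_1,\dots,z_m) := \det\Bigl(tI_d + \sum_{i=1}^m z_i A_i\Bigr),
\]
which, by Lemma~\ref{le:example of real stable} and Theorem~\ref{th:basic properties of stable}(i), is real stable (it is not identically zero since $p(0,\dots,0)=t^d\neq 0$). Because
\[
\mu[A_1,\dots,A_m](t) = \prod_{i=1}^m(1-\partial_i)p\,\big|_{z_1=\dots=z_m=0},
\]
it suffices to show that $\prod_{i=1}^m(1-\partial_i)p$ is nonzero at the origin.

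I would apply Theorem~\ref{th:real stable no zeros after 1-partial} at the point $x=(-\delta,\dots,-\delta)$ for a parameter $\delta>0$ to be chosen; then $x+\tilde{\delta}=0$, so the conclusion of the theorem yields exactly what we need, provided its two hypotheses hold. For the first, that $p$ has no zeros on $\{y\ge x\}$, we exploit $\sum A_i = I_d$ together with $A_i \ge 0$: whenever $y_i\ge -\delta$ for all $i$,
\[
tI_d + \sum_i y_i A_i = (t-\delta)I_d + \sum_i (y_i+\delta)A_i \ge (t-\delta)I_d,
\]
which is positive (hence invertible) as soon as $t>\delta$. For the second, Jacobi's formula~\eqref{eq:jacobi} evaluated at $x$, where the matrix reduces to $(t-\delta)I_d$, gives
\[
\Phi^j_p(x) = \tr\bigl((t-\delta)^{-1} A_j\bigr) = \frac{\tr A_j}{t-\delta} \le \frac{\epsilon}{t-\delta}.
\]

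It remains to choose $\delta$ so that $\frac{\epsilon}{t-\delta}+\frac{1}{\delta}\le 1$ with $t$ as small as possible. Rearranging (noting that we must have $\delta>1$), this is equivalent to $t \ge \delta + \frac{\epsilon\delta}{\delta-1}$; a routine calculus computation (or AM--GM after the substitution $s=\delta-1$) shows that the minimum of the right-hand side over $\delta>1$ equals $(1+\sqrt{\epsilon})^2$, attained at $\delta=1+\sqrt{\epsilon}$. Hence for every $t\ge (1+\sqrt{\epsilon})^2$ both hypotheses of Theorem~\ref{th:real stable no zeros after 1-partial} hold with this choice of $\delta$, delivering $\mu(t)\neq 0$. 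The only mildly delicate step is this optimization, which pinpoints precisely the constant $(1+\sqrt{\epsilon})^2$; the rest is a direct set-up in which the assumption $\sum A_i = I_d$ supplies the positivity needed for the zero-free region and $\tr A_j \le \epsilon$ controls the barrier function.
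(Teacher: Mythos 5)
Your proof is correct and is essentially the paper's proof up to a harmless change of variables: since $\sum_i A_i = I_d$, your polynomial $\det(tI_d+\sum_i z_iA_i)$ equals the paper's $\det(\sum_i z_iA_i)$ evaluated at $(z_1+t,\dots,z_m+t)$, so applying Theorem~\ref{th:real stable no zeros after 1-partial} at $x=(-\delta,\dots,-\delta)$ for your $p$ is the same as applying it at $x=(t-\delta,\dots,t-\delta)$ for the paper's, and your optimization in $\delta$ alone matches the paper's joint optimization of $t+\delta$ subject to $\epsilon/t+1/\delta\le 1$.
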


\begin{proof}
	The polynomial 
	\[
	p(z):=\det(\sum_{i=1}^{m}z_iA_i).
	\]
	is real stable, being the specialization of the polynomial $q$ in~\eqref{eq:definition of q} to $z=0$. If $t>0$
	and $\tilde{t}:=(t,\dots, t)\in \bbC^d$, then, for $y\ge \tilde{t}$ we have $ \sum_{i=1}^{m}y_iA_i\ge\sum_{i=1}^{m}tA_i=tI_d  $. Therefore $ \sum_{i=1}^{m}y_iA_i $ is invertible, and  $p(y)\not=0$.
	
	We may apply Jacobi's formula~\eqref{eq:jacobi} in order to compute the barrier function $\Phi_p^j$, and we obtain
	\[
	\Phi_p^j(x_1,\dots, x_m)=\tr ((\sum_{i=1}^{m}z_iA_i)^{-1}A_j).
	\]
	In particular, if $t>0$, then
	\[
	\Phi_p^j(t,\dots, t)=\tr(t^{-1}A_j)\le \frac{\epsilon}{t}.
	\]
It follows then from Theorem~\ref{th:real stable no zeros after 1-partial} that, if we  $t,\delta>0$ are such that $ \frac{\epsilon}{t}+\frac{1}{\delta}\le 1 $, then $ \prod_{i=1}^{m}(1-\partial_i)p $ has no zeros in $\{y\ge (t+\delta, \dots, t+\delta)\}	$. The choice $t=\epsilon+\sqrt{\epsilon}$, $ \delta=1+\sqrt{\epsilon} $ (which can easily be shown to be optimal) yields $ t+\delta=(1+\sqrt{\epsilon})^2 $, and therefore $p$ has no roots $y$ with $ y_i \ge (1+\sqrt{\epsilon})^2 \} $ for all $i$.

Now, using the relation $\sum_{i=1}^{m}A_i=1$, one obtains
\[
\begin{split}
\mu[A_1, \dots, A_m](z)&= \prod_{i=1}^{m} (1-\partial_i)\det(zI_d+\sum_{i=1}^{m}z_iA_i )\big|_{z_1=\dots=z_m=0}\\
&= \prod_{i=1}^{m} (1-\partial_i)\det(\sum_{i=1}^{m}w_iA_i)\big|_{w_1=\dots=w_m=z}\\
&= \prod_{i=1}^{m} (1-\partial_i) p(z,z, \dots, z),
\end{split}
\]
	which cannot be zero if $z\ge (1+\sqrt{\epsilon})^2$. Therefore all roots of $\mu$ are smaller than $ (1+\sqrt{\epsilon})^2$.
\end{proof}

\subsection{Decomposing in rank one matrices and the characteristic polynomial}

In an important particular case  the mixed characteristic polynomial coincides with a usual characteristic polynomial. Remember this  is defined, for $ A\in M_d(\bbC) $,  by 
$p_A(z )=\det(z  I_d-A)$.

\begin{lemma}\label{le:determinant multilinear}
	Suppose $B, A_1,\dots, A_m\in M_d(\bbC)$, and $A_1, \dots, A_m$ have rank one. Then the polynomial
	\[
	(z_1,\dots, z_m)\mapsto \det(B+z_1A_1+\cdots+ z_mA_m)
	\]
	is of degree $\le 1$ separately in each variable.
\end{lemma}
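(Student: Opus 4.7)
The plan is to reduce, by the evident symmetry in the $A_i$'s, to the claim for a single variable: fixing $z_2, \ldots, z_m$ and setting $B' := B + \sum_{i=2}^m z_i A_i$, it suffices to show that $f(z_1) := \det(B' + z_1 A_1)$ has degree at most $1$ in $z_1$. The key to this one-variable claim is the rank-one hypothesis: writing $A_1 = uv^*$ for some $u,v \in \bbC^d$, the $k$-th column of $A_1$ equals $\overline{v_k}\,u$, so the $k$-th column of $B' + z_1 A_1$ is $b'_k + z_1\overline{v_k}\,u$, where $b'_k$ denotes the $k$-th column of $B'$.

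Next, I would apply multilinearity of the determinant in its columns to expand $f(z_1)$ as
$$f(z_1) = \sum_{S \subseteq \{1,\dots,d\}} z_1^{|S|}\Big(\prod_{k \in S}\overline{v_k}\Big)\det(M_S),$$
where $M_S$ is the matrix whose $k$-th column is $u$ for $k \in S$ and $b'_k$ for $k \notin S$. Whenever $|S| \ge 2$, the matrix $M_S$ has at least two columns equal to $u$, so $\det(M_S) = 0$. Hence only the terms with $|S| \le 1$ survive, and $f(z_1)$ is a polynomial of degree at most $1$ in $z_1$, as required.

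The argument is essentially a single observation once multilinearity is invoked, and I do not anticipate any real obstacles. What is essential is the rank-one hypothesis: it forces every column of $A_1$ to be proportional to the single vector $u$, which is precisely what kills the higher-order terms. Without rank one, the columns of $A_1$ would span a larger subspace and no such cancellation would occur.
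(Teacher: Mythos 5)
Your proof is correct and follows essentially the same route as the paper: reduce to a single variable and then exploit the rank-one hypothesis to kill all terms of degree $\ge 2$. The only (cosmetic) difference is in the final step: the paper chooses a basis in which the first basis vector spans the image of $A_1$ and expands the determinant along the first row, whereas you factor $A_1 = uv^*$ and invoke column multilinearity directly; both make rigorous the same underlying observation.
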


\begin{proof}
	By fixing all the variables except one, we have to show that, for any $ B, A_1\in M_d(\bbC) $, $A_1$ of rank one, the function
	\[
	z\mapsto \det(B+zA_1)
	\]
	is of degree at most 1. This is obvious if we choose a basis in which the first vector spans the image of $ A_1 $, and we develop the determinant with respect to the first row.
\end{proof}

Suppose now $ p(z_1, \dots, z_m) $ is  a polynomial  of degree $\le 1$ separately in each variable. Then $p$ is equal to its Taylor expansion at the origin of order 1 in each variable, that is:
\[
p(z_1, \dots, z_m)=\sum_{\epsilon_i\in\{0,1\}} c_{\epsilon_1,\dots,\epsilon_m} z_1^{\epsilon_1}\cdots z_m^{\epsilon_m},
\]
with 
\[
c_{\epsilon_1,\dots,\epsilon_m}=\partial_1^{\epsilon_1}\cdots \partial_m^{\epsilon_m}p(w_1,\dots, w_m)\big|_{w_1=\dots=w_m=0}.
\]
Therefore
\[
\begin{split}
p(z_1, \dots, z_m)&= \sum_{\epsilon_i\in\{0,1\}} z_1^{\epsilon_1}\cdots z_m^{\epsilon_m}\partial_1^{\epsilon_1}\cdots \partial_m^{\epsilon_m}p(w_1,\dots, w_m)\big|_{w_1=\dots=w_m=0}\\
&= \prod_{i=1}^{m} (1+z_i\partial_i) p(w_1,\dots, w_m)\big|_{w_1=\dots=w_m=0}. 
\end{split}
\]
In the case of the polynomial in Lemma~\ref{le:determinant multilinear}, this formula becomes
\[
\det(B+\sum_{i=1}^{m}z_i A_i)=
\prod_{i=1}^{m} (1+z_i\partial_i) \det(B+\sum_{i=1}^{m}w_i A_i)\big|_{w_1=\dots=w_m=0}.
\]

In fact, we are interested by this last formula precisely when
 $B=z  I_d$ and all $z_i=-1$. We obtain then the next theorem.

\begin{theorem}\label{th:charact=mixed char}
Suppose $ A_1,\dots, A_m\in M_d(\bbC)$ have rank one. If $ A=A_1+\cdots+A_m $, then
\[
p_A(z )=\mu[A_1,\dots, A_m](z ).
\]	
\end{theorem}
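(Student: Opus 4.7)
The plan is to read the theorem off directly from the multilinear expansion formula derived in the paragraphs immediately preceding its statement. All the real work has already been done in Lemma~\ref{le:determinant multilinear} and in the Taylor expansion observation that followed it; the theorem is essentially just a specialization.

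More precisely, I would first invoke Lemma~\ref{le:determinant multilinear}, applied with $B = zI_d$: the polynomial
\[
P(w_1,\dots,w_m) := \det\bigl(zI_d + w_1 A_1 + \cdots + w_m A_m\bigr)
\]
is of degree at most $1$ separately in each $w_i$, since each $A_i$ has rank one (the variable $z$ is just a parameter here). Then the Taylor identity established in the excerpt gives
\[
P(z_1,\dots,z_m) = \prod_{i=1}^{m}(1 + z_i \partial_i)\,P(w_1,\dots,w_m)\Big|_{w_1=\dots=w_m=0}
\]
for any choice of $z_1,\dots,z_m\in\mathbb C$.

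The final step is to specialize $z_1=\cdots=z_m=-1$. On the one hand, $P(-1,\dots,-1)=\det(zI_d-\sum_i A_i)=\det(zI_d-A)=p_A(z)$. On the other hand, each factor $(1+z_i\partial_i)$ becomes $(1-\partial_i)$, so the right-hand side becomes
\[
\prod_{i=1}^{m}(1-\partial_i)\det\Bigl(zI_d+\sum_{i=1}^{m}w_i A_i\Bigr)\Big|_{w_1=\dots=w_m=0},
\]
which is exactly the defining expression for $\mu[A_1,\dots,A_m](z)$ from Definition~\ref{de:mixed char} (after the cosmetic renaming $w_i\leftrightarrow z_i$). Equating the two sides yields $p_A(z)=\mu[A_1,\dots,A_m](z)$.

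There is no real obstacle: the only thing that had to be verified nontrivially was the multilinearity of the determinant in the rank-one directions, and that is precisely the content of Lemma~\ref{le:determinant multilinear}. What might be worth double-checking in a careful write-up is only the bookkeeping of the sign produced by setting $z_i=-1$, namely that $(1+z_i\partial_i)|_{z_i=-1} = (1-\partial_i)$ lines up correctly with the definition of $\mu$, and that the role of the auxiliary variable $z$ (which is not differentiated) is compatible with the ambient variable in the definition of the mixed characteristic polynomial.
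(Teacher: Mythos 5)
Your argument is correct and follows the paper's proof exactly: invoke Lemma~\ref{le:determinant multilinear} with $B=zI_d$, apply the Taylor expansion identity for polynomials of degree at most one in each variable, and specialize $z_1=\cdots=z_m=-1$. Nothing is missing; the sign bookkeeping you flag is indeed the only point requiring care, and it works out as you describe.
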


\begin{remark}\label{re:operators and matrices}
	The mixed characteristic polynomial and the usual characteristic polynomial are invariant with respect to a change of basis. So, although we have spoken about matrices for convenience, the statements of Theorems~\ref{th:sum=1, trace small} and~\ref{th:charact=mixed char} can be stated for $A_1, \dots, A_m\in \LL(V)$, where $\LL(V)$ denotes the space of linear operators on the finite dimensional vector space~$V$.
\end{remark}

\section{Randomisation}\label{se:random}

\subsection{Random matrices and determinants}

Let $ (\Omega, \mathbf{p}) $ be a finite probability space. If $X$ is a random variable on $\Omega$, the \emph{expectation} (or \emph{average}) of $\bbE (X)$ is defined, as usually, by
\[
\bbE (X):=\sum_{\omega\in\Omega}\mathbf{p}(\omega) X(\omega).
\]
If $X_1, \dots, X_m$ are \emph{independent} random variables, then, in particular, we have
\begin{equation}\label{eq:independent expectations}
\bbE(X_1\cdots X_m)=\bbE(X_1)\cdots \bbE(X_m).
\end{equation}

We will  use random matrices $\mathbf{A}(\omega)\in M_d(\bbC)$, whose entries are random variables; then $ \bbE(\mathbf{A}) $ is the matrix whose entries are the expectations of the corresponding entries of $\mathbf{A}$. The random matrices $\mathbf{A}_1, \mathbf{A}_2$ are called independent if any entry of $\mathbf{A}_1$ is independent  of every entry of $\mathbf{A}_2$. Also, when we say that a random matrix $\mathbf{A}(\omega)$ has rank one, this means that $\mathbf{A}(\omega)$ has rank one for any $\omega\in\Omega$.

The characteristic polynomial $p_\mathbf{A}$ of a random matrix $\mathbf{A}$ is also a random variable, by which we mean that its coeficients are random variables. Then the polynomial $\bbE(p_\mathbf{A})$ has as coefficients the expectations of the coefficients of $ \mathbf{A} $.

\begin{theorem}\label{th:expectation of char pol}
	Suppose $ \mathbf{A}_1(\omega), \dots, \mathbf{A}_m(\omega) $ are independent rank one random matrices in $M_d(\bbC)$, and $\mathbf{A}=\mathbf{A}_1+\dots+ \mathbf{A}_m$. Then
	\[
	\bbE(p_\mathbf{A})=\mu[\bbE(\mathbf{A}_1), \dots, \bbE(\mathbf{A}_m)].
	\]
\end{theorem}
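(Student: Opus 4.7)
The plan is to combine Theorem~\ref{th:charact=mixed char} (which converts the characteristic polynomial of the sum into a mixed characteristic polynomial when the summands are rank one) with the multilinearity of $\mu[A_1,\dots,A_m](z)$ in its matrix arguments and the factorization of expectations of products of independent random variables~\eqref{eq:independent expectations}.

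First, I would invoke Theorem~\ref{th:charact=mixed char} pointwise on $\Omega$: since each $\mathbf{A}_i(\omega)$ has rank one, for every $\omega$ we have
\[
p_{\mathbf{A}(\omega)}(z) = \mu[\mathbf{A}_1(\omega),\dots,\mathbf{A}_m(\omega)](z).
\]
Taking expectations on both sides reduces the statement to showing
\[
\bbE\bigl(\mu[\mathbf{A}_1,\dots,\mathbf{A}_m]\bigr) = \mu[\bbE(\mathbf{A}_1),\dots,\bbE(\mathbf{A}_m)],
\]
i.e.\ that $\mu[\,\cdot\,]$ commutes with the expectation under the independence hypothesis.

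For this the key input is the observation recorded right after Definition~\ref{de:mixed char}: if we fix all but one of the matrices, $\mu[A_1,\dots,A_m](z)$ is of degree at most $1$ in the entries of the remaining matrix. Hence each coefficient of $z^k$ in $\mu[A_1,\dots,A_m](z)$ is a polynomial in the entries of the $A_i$'s which is separately linear in the entries of each $A_i$. Expanding, every such coefficient is a finite sum of monomials of the form
\[
\alpha \cdot a^{(1)}_{i_1 j_1}\,a^{(2)}_{i_2 j_2}\cdots a^{(m)}_{i_m j_m},
\]
where $a^{(k)}_{ij}$ denotes the $(i,j)$ entry of $A_k$ and $\alpha\in\bbC$ is a universal constant. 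When the $A_k$ are replaced by the random matrices $\mathbf{A}_k$, independence of $\mathbf{A}_1,\dots,\mathbf{A}_m$ implies independence of any single entry of $\mathbf{A}_k$ from any entry of $\mathbf{A}_\ell$ for $k\ne\ell$, so by~\eqref{eq:independent expectations}
\[
\bbE\bigl(\mathbf{a}^{(1)}_{i_1 j_1}\cdots\mathbf{a}^{(m)}_{i_m j_m}\bigr) = \bbE(\mathbf{a}^{(1)}_{i_1 j_1})\cdots\bbE(\mathbf{a}^{(m)}_{i_m j_m}).
\]
Summing over the monomials then shows that taking the expectation of each coefficient of $\mu[\mathbf{A}_1,\dots,\mathbf{A}_m]$ yields exactly the corresponding coefficient of $\mu[\bbE(\mathbf{A}_1),\dots,\bbE(\mathbf{A}_m)]$, which is what we want.

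I do not expect any real obstacle: the whole argument is essentially that $\mu$ is a multilinear function of its matrix arguments, and the expectation of a multilinear form in independent random matrices equals the same multilinear form evaluated at the expectations. The only point to take care of is remembering that ``degree $\le 1$ separately in each matrix'' is exactly what permits this factorization — if $\mu$ were of higher degree in some $A_k$, then a single monomial could contain a product of two entries of $\mathbf{A}_k$, and the expectation would not factor as a product of entrywise expectations.
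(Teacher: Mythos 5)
Your proof is correct and follows the paper's own argument exactly: apply Theorem~\ref{th:charact=mixed char} pointwise, take expectations, and use multilinearity together with independence to pull the expectation inside $\mu$. The paper states the last step in one line ("independence combined with multilinearity implies..."), whereas you unpack it by expanding each coefficient of $\mu$ into monomials containing at most one entry from each matrix and factoring the expectation via~\eqref{eq:independent expectations}; this is a faithful elaboration of the same idea, not a different route.
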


\begin{proof}
	By Theorem~\ref{th:charact=mixed char} we have, for each $ \omega\in\Omega $, $ p_{\mathbf{A}(\omega)} = \mu[\mathbf{A}_1(\omega), \dots, \mathbf{A}_m(\omega)] $. By taking expectations, 
	\[
	\bbE(p_\mathbf{A})=\bbE (\mu[\mathbf{A}_1(\omega), \dots, \mathbf{A}_m(\omega)]).
	\]
	Now independence of $\mathbf{A}_i$s combined with the multilinearity of $\mu$ implies that
	\[
	\bbE (\mu[\mathbf{A}_1(\omega), \dots, \mathbf{A}_m(\omega)])=\mu[\bbE(\mathbf{A}_1), \dots, \bbE(\mathbf{A}_m)],
	\]
	which ends the proof.
\end{proof}

We can say more if we also assume that the $ \mathbf{A}_i $s are all positive.

\begin{theorem}\label{th:min root le mixed char of expectation}
Suppose $ \mathbf{A}_1(\omega), \dots, \mathbf{A}_m(\omega) $ are independent rank one positive random matrices in $M_d(\bbC)$, and $\mathbf{A}=\mathbf{A}_1+\dots+ \mathbf{A}_m$.	Then, for  any $ j=1,\dots, d $, we have
\[
\min_{\omega\in\Omega} \rho_j(p_{\mathbf{A}(\omega)}) 
\le \rho_j(\mu[\bbE(\mathbf{A}_1), \dots, \bbE(\mathbf{A}_m)])
\le \max_{\omega\in\Omega} \rho_j(p_{\mathbf{A}(\omega)}).
\]
\end{theorem}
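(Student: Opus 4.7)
The plan is to write $\bbE(p_\mathbf{A})=\mu[\bbE(\mathbf{A}_1),\dots,\bbE(\mathbf{A}_m)]$ (via Theorem~\ref{th:expectation of char pol}) as the last term of a chain of polynomials obtained from $p_{\mathbf{A}(\omega)}$ by averaging one variable at a time, and to control each step using Theorem~\ref{th:nice families}.

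Concretely, for $0\le k\le m$ define
\[
q_k(\omega_{k+1},\dots,\omega_m):=\mu[\bbE(\mathbf{A}_1),\dots,\bbE(\mathbf{A}_k),\mathbf{A}_{k+1}(\omega_{k+1}),\dots,\mathbf{A}_m(\omega_m)].
\]
Using that each $\mathbf{A}_k$ depends only on the coordinate $\omega_k$, together with the multilinearity of $\mu$ noted just after Definition~\ref{de:mixed char}, one checks that $\bbE_{\omega_k}(q_{k-1})=q_k$. At the endpoints, $q_0=p_{\mathbf{A}(\omega)}$ by Theorem~\ref{th:charact=mixed char} (since every $\mathbf{A}_i(\omega_i)$ has rank one), and $q_m=\bbE(p_\mathbf{A})$ by Theorem~\ref{th:expectation of char pol}. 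Thus the two quantities one wants to compare are $\rho_j(q_0)$ and $\rho_j(q_m)$.

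The crucial step is to show that, for each $k\in\{1,\dots,m\}$ and each fixed choice of $\omega_{k+1},\dots,\omega_m$, the family $\{q_{k-1}(\omega_k,\omega_{k+1},\dots,\omega_m):\omega_k\in\Omega_k\}$ is nice. Once more by multilinearity of $\mu$ in the $k$-th slot, any convex combination of this family equals
\[
\mu\bigl[\bbE(\mathbf{A}_1),\dots,\bbE(\mathbf{A}_{k-1}),\sum_{\omega_k}s_{\omega_k}\mathbf{A}_k(\omega_k),\mathbf{A}_{k+1}(\omega_{k+1}),\dots,\mathbf{A}_m(\omega_m)\bigr].
\]
All of the matrices in this expression are positive (being either the $\mathbf{A}_i(\omega_i)$ themselves or convex combinations of such), so Theorem~\ref{th:mixed has real roots} guarantees that the polynomial has only real roots. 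The implication (2)$\Rightarrow$(1) of Theorem~\ref{th:nice families} then yields niceness, and~\eqref{eq:roots of nice family} applied to the particular convex combination with weights $\mathbf{p}_k(\omega_k)$ gives
\[
\min_{\omega_k}\rho_j(q_{k-1})\le\rho_j(q_k)\le\max_{\omega_k}\rho_j(q_{k-1}),
\]
with the remaining variables $\omega_{k+1},\dots,\omega_m$ held fixed.

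Iterating this inequality for $k=m,m-1,\dots,1$ (taking the maximum over $\omega_m$ first, then $\omega_{m-1}$, and so on; and dually the minimum) interpolates from $\rho_j(q_m)=\rho_j(\bbE(p_\mathbf{A}))$ to the extrema of $\rho_j(q_0)=\rho_j(p_{\mathbf{A}(\omega)})$ over the whole of $\Omega$, yielding the stated conclusion. The conceptual point to get right—and really the only obstacle—is to resist trying to analyse the full family $\{p_{\mathbf{A}(\omega)}\}_{\omega\in\Omega}$ in one stroke: a general, non-product convex combination of its members need not be a mixed characteristic polynomial of any fixed tuple of positive matrices, so niceness of the full family is not transparent. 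Integrating variables one at a time keeps all the other slots deterministic, and thereby preserves exactly the multilinear structure required to invoke Theorem~\ref{th:mixed has real roots} at each stage.
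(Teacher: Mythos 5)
Your proof is correct and follows essentially the same strategy as the paper: replace the random matrices by their expectations one slot at a time, controlling each step via Theorem~\ref{th:nice families} together with the multilinearity of $\mu$ and Theorem~\ref{th:mixed has real roots}. The one point you should make explicit is that passing to a product sample space with one coordinate per $\mathbf{A}_k$ is licensed by independence, and that the $\min$ and $\max$ over the original $\Omega$ coincide with those over the product because every joint tuple of values occurs with positive probability --- the paper accomplishes this same reconciliation, slightly less transparently, with its explicit $\sigma_0$ argument at the end of the proof.
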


\begin{proof}
	We prove only the left hand side inequality; the right hand side is similar. It is enough to show that for any $ i=1, \dots  $ we have
	\begin{equation}\label{eq:going from i-1 to i}
	\begin{split}
	&	\min_{\omega\in\Omega}  \rho_j(\mu[\mathbf{A}_1(\omega), \dots,\mathbf{A}_{i-1}(\omega),
		\mathbf{A}_i(\omega), \bbE(\mathbf{A}_{i+1}), \dots, \bbE(\mathbf{A}_m)])\\
	&\qquad\qquad		\le
		\min_{\omega\in\Omega}  \rho_j(\mu[\mathbf{A}_1(\omega), \dots,\mathbf{A}_{i-1}(\omega), \bbE(\mathbf{A}_i), \bbE(\mathbf{A}_{i+1}), \dots, \bbE(\mathbf{A}_m)]).
	\end{split}
	\end{equation}
	Indeed, for $ i=m $ the left hand side coincides with $ \min_{\omega\in\Omega} \rho_j(p_{\mathbf{A}(\omega)})  $ by Theorem~\ref{th:charact=mixed char}, while for $i=1$ the right hand side is precisely $ \rho_j(\mu[\bbE(\mathbf{A}_1), \dots, \bbE(\mathbf{A}_m)]) $. The chain of inequalities corresponding to $i=1,2,\dots, m$ proves then the theorem.
	
	Fix then $i$ and $\omega\in\Omega$, and consider the family of polynomials
	\[
	f_{\omega'}=\mu[\mathbf{A}_1(\omega), \dots, \mathbf{A}_{i-1}(\omega), \mathbf{A}_i(\omega'), \bbE(\mathbf{A}_{i+1}), \dots, \bbE(\mathbf{A}_m)], \quad \omega'\in\Omega.
	\]
	Take $c_{\omega'}\ge0$, with $\sum_{\omega'\in\Omega}c_{\omega'}=1$. By the multilinearity of the mixed characteristic polynomial, we have
	\[
	\begin{split}
	\sum_{\omega'\in\Omega}c_{\omega'} 	f_{\omega'}&
	= \sum_{\omega'\in\Omega}c_{\omega'} \mu[\mathbf{A}_1(\omega), \dots, \mathbf{A}_{i-1}(\omega), \mathbf{A}_i(\omega'), \bbE(\mathbf{A}_{i+1}), \dots, \bbE(\mathbf{A}_m)]\\
	&=\mu[\mathbf{A}_1(\omega), \dots, \mathbf{A}_{i-1}(\omega),\sum_{\omega'\in\Omega}c_{\omega'} \mathbf{A}_i(\omega'), \bbE(\mathbf{A}_{i+1}), \dots, \bbE(\mathbf{A}_m)].
	\end{split}
	\]
	Since the last polynomial is the mixed characterstic polynomial of positive matrices, it has all roots real by Theorem~\ref{th:mixed has real roots}. It follows by Theorem~\ref{th:nice families} that $ \{ f_{\omega'}:\omega'\in\Omega \} $ is a nice family. Moreover, if we take as coefficients of the convex combination $ c_{\omega'}=\mathbf{p}(\omega') $ and so for  any $ j=1,\dots, d $ we have $\sum_{\omega'\in\Omega}c_{\omega'} \mathbf{A}_i(\omega')=\bbE(\mathbf{A}_i)  $. Applying the last part of Theorem~\ref{th:nice families} it follows that for any $ j=1,\dots, d $,
	\[
	\begin{split}
&	\min_{\omega'\in\Omega}  \rho_j(\mu[\mathbf{A}_1(\omega), \dots,\mathbf{A}_{i-1}(\omega),
	\mathbf{A}_i(\omega'), \bbE(\mathbf{A}_{i+1}), \dots, \bbE(\mathbf{A}_m)])\\
&\qquad\qquad \rho_j(\mu[\mathbf{A}_1(\omega), \dots,\mathbf{A}_{i-1}(\omega), \bbE(\mathbf{A}_i), \bbE(\mathbf{A}_{i+1}), \dots, \bbE(\mathbf{A}_m)]).	
	\end{split}
	\]
	Taking the minimum with respect to $\omega\in\Omega$, we obtain
		\begin{equation}\label{eq:min min}
		\begin{split}
		&	\min_{\omega\in\Omega} 	\min_{\omega'\in\Omega}  \rho_j(\mu[\mathbf{A}_1(\omega), \dots,\mathbf{A}_{i-1}(\omega),
		\mathbf{A}_i(\omega'), \bbE(\mathbf{A}_{i+1}), \dots, \bbE(\mathbf{A}_m)])\\
		&\qquad\qquad	\min_{\omega\in\Omega}  \rho_j(\mu[\mathbf{A}_1(\omega), \dots,\mathbf{A}_{i-1}(\omega), \bbE(\mathbf{A}_i), \bbE(\mathbf{A}_{i+1}), \dots, \bbE(\mathbf{A}_m)]).	
		\end{split}
		\end{equation}
	Suppose the minimum in the left hand side is attained in $\omega=\omega_0$, $\omega'=\omega'_0$. By independence of the random matrices $\mathbf{A}_i$, we have
	\[
	\begin{split}
	&	\mathbf{p}(\{\sigma\in\Omega : \mathbf{A}_1(\sigma)=\mathbf{A}_1(\omega_0),\dots, \mathbf{A}_{i-1}(\sigma)=\mathbf{A}_{i-1}(\omega_0), \mathbf{A}_i(\sigma)=\mathbf{A}_(\omega'_0)  \})\\
	&\qquad \mathbf{p}(\{\sigma\in\Omega : \mathbf{A}_1(\sigma)=\mathbf{A}_1(\omega_0),\dots, \mathbf{A}_{i-1}(\sigma)=\mathbf{A}_{i-1}(\omega_0)\}) \mathbf{p}(\{\sigma\in\Omega :\mathbf{A}_i(\sigma)=\mathbf{A}_(\omega'_0)  \})>0.
	\end{split}
	\]
Taking $\sigma_0\in \Omega$ in the set in the left hand side, we obtain
\[
\begin{split}
&\min_{\sigma\in\Omega}\rho_j(\mu[\mathbf{A}_1(\sigma), \dots,\mathbf{A}_{i-1}(\sigma),
\mathbf{A}_i(\sigma), \bbE(\mathbf{A}_{i+1}), \dots, \bbE(\mathbf{A}_m)])\\
&\qquad\qquad\le \rho_j(\mu[\mathbf{A}_1(\sigma_0), \dots,\mathbf{A}_{i-1}(\sigma_0),
\mathbf{A}_i(\sigma_0), \bbE(\mathbf{A}_{i+1}), \dots, \bbE(\mathbf{A}_m)])\\
&\qquad\qquad=\rho_j(\mu[\mathbf{A}_1(\omega_0), \dots,\mathbf{A}_{i-1}(\omega_0),
\mathbf{A}_i(\omega'_0), \bbE(\mathbf{A}_{i+1}), \dots, \bbE(\mathbf{A}_m)])\\
&\qquad\qquad =\min_{\omega\in\Omega} 	\min_{\omega'\in\Omega}  \rho_j(\mu[\mathbf{A}_1(\omega), \dots,\mathbf{A}_{i-1}(\omega),
\mathbf{A}_i(\omega'), \bbE(\mathbf{A}_{i+1}), \dots, \bbE(\mathbf{A}_m)]).
\end{split}
\]	
This inequality, together with~\eqref{eq:min min}, implies~\eqref{eq:going from i-1 to i}, finishing thus the proof of the theorem.	
\end{proof}

\begin{remark}\label{re:E(A) is not of rank one}
	The point of Theorem~\ref{th:min root le mixed char of expectation} is that the middle term might be easier to compute or to estimate. But, since the matrices $\bbE(\mathbf{A}_1), \dots, \bbE(\mathbf{A}_m)$ are \emph{not} of rank one, Theorem~\ref{th:charact=mixed char} does not apply, and $\mu[\bbE(\mathbf{A}_1), \dots, \bbE(\mathbf{A}_m)]$ is not a characteristic polynomial. However, Theorem~\ref{th:min root le mixed char of expectation} tells us that its roots can be used to estimate the eigenvalues of $\mathbf{A}(\omega)$ for at least \emph{some} value of $\omega$. 
\end{remark}

\begin{corollary}\label{co:norm bounded for random matrices}
	Let $ \mathbf{A}_1(\omega), \dots, \mathbf{A}_m(\omega) $ be independent rank one positive random matrices in $M_d(\bbC)$, and $\mathbf{A}=\mathbf{A}_1+\dots+ \mathbf{A}_m$. Suppose $\bbE(\mathbf{A})=I_d$ and $ \bbE(\tr \mathbf{A}_i)\le \epsilon $ for some $ \epsilon>0 $. Then
	\[
	\min_{\omega\in\Omega}\|\mathbf{A}(\omega)\|\le (1+\sqrt{\epsilon})^{2}.
	\]
\end{corollary}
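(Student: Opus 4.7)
The plan is to combine Theorem~\ref{th:min root le mixed char of expectation} with Theorem~\ref{th:sum=1, trace small}, applied to the averaged matrices $\bbE(\mathbf{A}_i)$.

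First I would observe that for each fixed $\omega$, the matrix $\mathbf{A}(\omega)=\sum_i \mathbf{A}_i(\omega)$ is a sum of positive matrices, hence positive and in particular selfadjoint. Its operator norm therefore equals its largest eigenvalue, which is $\rho_1(p_{\mathbf{A}(\omega)})$. Thus
\[
\min_{\omega\in\Omega}\|\mathbf{A}(\omega)\|=\min_{\omega\in\Omega}\rho_1(p_{\mathbf{A}(\omega)}).
\]

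Next, taking $j=1$ in the left-hand inequality of Theorem~\ref{th:min root le mixed char of expectation}, I get
\[
\min_{\omega\in\Omega}\rho_1(p_{\mathbf{A}(\omega)})\le\rho_1\bigl(\mu[\bbE(\mathbf{A}_1),\dots,\bbE(\mathbf{A}_m)]\bigr).
\]
So it suffices to bound the largest root of the mixed characteristic polynomial of the deterministic matrices $B_i:=\bbE(\mathbf{A}_i)$.

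For this I would verify that the $B_i$ satisfy the hypotheses of Theorem~\ref{th:sum=1, trace small}. Each $B_i$ is positive as a convex average of positive matrices. By linearity of expectation, $\sum_{i=1}^{m} B_i=\bbE(\mathbf{A})=I_d$. Finally, $\tr B_i=\tr\bbE(\mathbf{A}_i)=\bbE(\tr\mathbf{A}_i)\le\epsilon$. Therefore Theorem~\ref{th:sum=1, trace small} yields $\rho_1(\mu[B_1,\dots,B_m])<(1+\sqrt{\epsilon})^2$, and chaining the inequalities gives the result.

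There is essentially no obstacle here beyond correctly threading the three pieces together; the real work was already done in Theorem~\ref{th:min root le mixed char of expectation} (which transfers a bound on the averaged mixed polynomial to the existence of a favorable $\omega$) and in Theorem~\ref{th:sum=1, trace small} (which supplies the $(1+\sqrt\epsilon)^2$ root bound via the barrier-function machinery). The only subtlety worth flagging, as in Remark~\ref{re:E(A) is not of rank one}, is that the $B_i$ are no longer rank one, so $\mu[B_1,\dots,B_m]$ is not itself a characteristic polynomial — but this does not matter because Theorem~\ref{th:sum=1, trace small} only requires positivity and the trace/sum conditions.
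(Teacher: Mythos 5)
Your proposal is correct and follows exactly the same route as the paper: verify that the matrices $\bbE(\mathbf{A}_i)$ satisfy the hypotheses of Theorem~\ref{th:sum=1, trace small} to bound the largest root of $\mu[\bbE(\mathbf{A}_1),\dots,\bbE(\mathbf{A}_m)]$ by $(1+\sqrt{\epsilon})^2$, then invoke the left-hand inequality of Theorem~\ref{th:min root le mixed char of expectation} with $j=1$. The only difference is that you spell out the positivity and sum conditions a bit more explicitly than the paper does, which is harmless.
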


\begin{proof}
	Since $\tr(\bbE(\mathbf{A}_i))= \bbE(\tr \mathbf{A}_i)\le \epsilon $, the matrices $\bbE(\mathbf{A}_1), \dots, \bbE(\mathbf{A}_m)$ satisfy the hypotheses of Theorem~\ref{th:sum=1, trace small}, all roots of $ \mu[\bbE(\mathbf{A}_1), \dots, \bbE(\mathbf{A}_m)] $ are smaller than $ (1+\sqrt{\epsilon})^{2} $. By Theorem~\ref{th:min root le mixed char of expectation} we obtain, in particular,
	\[
	\min_{\omega\in\Omega}\|\mathbf{A}(\omega)\|= \min_{\omega\in\Omega} \rho_1(p_{\mathbf{A}(\omega)}) 
	\le \rho_1(\mu[\bbE(\mathbf{A}_1), \dots, \bbE(\mathbf{A}_m)])\le (1+\sqrt{\epsilon})^{2}.
	\qedhere
	\]
\end{proof}

\subsection{Probability and partitions}

The last theorem of this section gets us  closer to the paving conjecture. It is here that we make the connection between the probability space and the partitions.
Let us first note that, similarly to Remark~\ref{re:operators and matrices}, one can see that the independence condition is not affected by a change of basis. So in Theorem~\ref{th:min root le mixed char of expectation} and in Corollary~\ref{co:norm bounded for random matrices} we may assume that $\mathbf{A_i}$ take values in $\LL(V)$ for some finite dimensional vector space $V$.  This observation will be used in the proof of the next theorem.

\begin{theorem}\label{th:first theorem with partitions}
	Suppose $ A_1,\dots, A_m\in M_d(\bbC) $ are positive rank one matrices, such that $\sum_{i=1}^{m} A_i=I_d$ and $\|A_i\|\le C$ for all $i=1,\dots, m$. Then for every positive integer $r$ there exists a partition $S_1,\dots, S_r$ of $ \{1, \dots, m \} $, such that
	\[
	\Big\| \sum\limits_{i\in S_j} A_i\Big\| \le \left( \frac{1}{\sqrt{r}}+\sqrt{C}\right)^2
	\]
	for any $j=1, \dots, r$.
\end{theorem}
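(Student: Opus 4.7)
The plan is to deduce this from Corollary~\ref{co:norm bounded for random matrices} by choosing a probability space whose points are in bijection with partitions of $\{1,\dots,m\}$ into $r$ pieces, and by constructing random rank-one positive matrices whose sum simultaneously encodes, inside one ambient space, all $r$ diagonal blocks of a given partition.

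Concretely, take $V=\bbC^d\oplus\cdots\oplus\bbC^d$ ($r$ copies), with block-diagonal embeddings $\iota_j:M_d(\bbC)\hookrightarrow \LL(V)$, $j=1,\dots,r$. Let $\Omega=\{1,\dots,r\}^m$ with the uniform probability; a point $\sigma\in\Omega$ encodes the partition $S_j(\sigma)=\{i:\sigma(i)=j\}$. For each $i=1,\dots,m$ define the random matrix
\[
\mathbf{A}_i(\sigma):=r\,\iota_{\sigma(i)}(A_i)\in\LL(V).
\]
The coordinates $\sigma(1),\dots,\sigma(m)$ are independent, so by the comment at the start of Subsection~6.2 the $\mathbf{A}_i$ are independent as operator-valued random variables. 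Each $\mathbf{A}_i(\sigma)$ is positive and of rank one since $A_i$ is, and a direct computation gives
\[
\bbE(\mathbf{A}_i)=\sum_{j=1}^r \tfrac{1}{r}\cdot r\,\iota_j(A_i)=\iota_1(A_i)+\cdots+\iota_r(A_i),
\]
so setting $\mathbf{A}=\mathbf{A}_1+\cdots+\mathbf{A}_m$ and using $\sum_i A_i=I_d$ yields $\bbE(\mathbf{A})=I_V$. Also, since $A_i$ is positive and of rank one we have $\tr A_i=\|A_i\|\le C$; hence $\tr \mathbf{A}_i(\sigma)=r\tr A_i$ for every $\sigma$, giving $\bbE(\tr \mathbf{A}_i)\le rC$.

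Corollary~\ref{co:norm bounded for random matrices} applied with $\epsilon=rC$ then produces some $\sigma_0\in\Omega$ with
\[
\|\mathbf{A}(\sigma_0)\|\le (1+\sqrt{rC})^2.
\]
But $\mathbf{A}(\sigma_0)$ is block diagonal, with $j$-th block equal to $r\sum_{i\in S_j(\sigma_0)}A_i$, so $\|\mathbf{A}(\sigma_0)\|=r\max_j\|\sum_{i\in S_j(\sigma_0)}A_i\|$. Dividing by $r$ gives
\[
\max_{j}\Bigl\|\sum_{i\in S_j(\sigma_0)}A_i\Bigr\|\le \frac{(1+\sqrt{rC})^2}{r}=\Bigl(\tfrac{1}{\sqrt r}+\sqrt C\Bigr)^2,
\]
which is exactly the claimed estimate for the partition $S_1(\sigma_0),\dots,S_r(\sigma_0)$.

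The substantive step is the encoding: one must find a single random matrix $\mathbf{A}$ whose operator norm reads off simultaneously the norms of all $r$ blocks of the partition (hence the direct-sum construction) while still having $\bbE(\mathbf{A})=I$ (hence the compensating factor $r$ in front of $A_i$). Once this setup is in place, the hypotheses of Corollary~\ref{co:norm bounded for random matrices}—positivity, rank one, independence, normalized expectation, and trace bound—fall out by direct verification, and the arithmetic $(1+\sqrt{rC})^2/r=(1/\sqrt r+\sqrt C)^2$ produces the stated bound.
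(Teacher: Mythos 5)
Your proof is correct and follows essentially the same approach as the paper: the same probability space $\Omega=\{1,\dots,r\}^m$ encoding partitions, the same direct-sum construction $\mathbf{A}_i(\sigma)=r\,\iota_{\sigma(i)}(A_i)$ on $V=\bbC^d\oplus\cdots\oplus\bbC^d$, the same verification of the hypotheses of Corollary~\ref{co:norm bounded for random matrices} (including the observation $\tr A_i=\|A_i\|$ for a positive rank-one matrix), and the same block-diagonal readout followed by division by $r$.
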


\begin{proof}
	Since the purpose is to find a partition with certain properties, we will take as a random space $\Omega$ precisely the space of all partitions of $ \{1, \dots, m \} $ in $r$ sets, with uniform probability $\bf p$. Such a partition is determined by an element $\omega=(\omega_1,\dots, \omega_m)$, where $\omega_j\in\{1, \dots, r \}$, and $S_j=\{k: \omega_k=j\}$; so $\Omega=\{1, \dots, r \}^m$. Also, the different coordinates, that is the maps $\omega\mapsto \omega_i$, are   independent scalar random variables on $\Omega$.

	We consider the space $V:=\bbC^d\oplus\cdots\oplus \bbC^d$ and define the random matrices $\mathbf{A}_i$ ($i=1,\dots, m$) with values in $\LL(V)$ by 
	\begin{equation}\label{eq:decomposition tilde A(omega)}
	\mathbf{A}_i(\omega)=0\oplus\cdots\oplus rA_i\oplus\dots\oplus0,
	\end{equation}
	where $rA_i$ appears in  position $\omega_i$. 
	
	These are independent random matrices (since the coordinates $\omega_i$ are independent). If we fix $1\le j\le r$, then  $\omega_i=j$ with probability $1/r$, and so $rA_i$ appears in  position $j$ with probability $1/r$. Therefore
	\[
		\bbE(\mathbf{A}_i)= \frac{1}{r}rA_i\oplus\cdots \oplus  \frac{1}{r}rA_i= A_i\oplus A_i\oplus\cdots\oplus A_i.
	\]
	If $\mathbf{A}=\mathbf{A}_1+\dots+\mathbf{A}_m$, then 
	\[
	\bbE(\mathbf{A})=  \sum_{i=1}^{m} \bbE (\mathbf{A}_i=
	\sum_{i=1}^{m} (A_i\oplus A_i\oplus\cdots\oplus A_i)=I_V.
	\]
	Since $\tr\mathbf{A}_i(\omega)=r\tr A_i$ for all $i$, we have
	\[
	\bbE(\tr \mathbf{A}_i)= \bbE(r\tr A_i)=r\bbE()\|A_i\|)\le rC.
	\]
	Corollary~\ref{co:norm bounded for random matrices} yields the existence of $ \omega\in\Omega $ such that
	\[
	\|\mathbf{A}(\omega)\|\le (1+\sqrt{rC})^2.
	\]
	But, according to~\eqref{eq:decomposition tilde A(omega)}, we have
	\[
	\mathbf{A}(\omega)=\left(r
	\sum_{\omega_i=1} A_i\right)\oplus \left(r\sum_{\omega_i=2} A_i\right)\oplus\cdots \oplus\left( 	r\sum_{\omega_i=r} A_i.
	\right)
	\]
	We define then $S_j=\{i:\omega_i=j	\}$. It follows that $ \|r\sum_{i\in S_j}A_i\|\le (1+\sqrt{rC})^2 $ for all $j$, and dividing by $r$ ends the proof of the theorem.
\end{proof}

\section{Proof of the Paving Conjecture}\label{se:main proof}

We may now proceed to the proof of the paving conjecture; from this point on all we need from the previous sections is Theorem~\ref{th:first theorem with partitions}. We first deal with orthogonal projections. For such operators the paving conjecture  is trivially verified (exercise: if $P$ is an orthogonal projection and $\diag P=0$, then $P=0$). But we will prove a  quantitative version of the paving conjecture, in which one does not assume zero diagonal. 

\begin{lemma}\label{le:paving for projections}
Suppose $P\in M_m(\bbC)$ is an orthogonal projection. For any $r\in \bbN$ there exists diagonal orthogonal projections $Q_1,\dots, Q_r\in M_m(\bbC)$, with  $\sum_{j=1}^{r}Q_j=I_m$, such that
\[
\|Q_jPQ_j\|\le \left( \frac{1}{\sqrt{r}} + \sqrt{\|\diag P\|} \right)^2
\] 	
for all $j=1,\dots, r$.
\end{lemma}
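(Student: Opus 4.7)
The plan is to package the projection $P$ into a family of rank-one positive matrices summing to an identity, so that Theorem~\ref{th:first theorem with partitions} applies directly. Let $d=\rank P$ and let $U\in M_{m\times d}(\bbC)$ be an isometry whose columns are an orthonormal basis of the range of $P$; then $P=UU^*$ and $U^*U=I_d$. Define the vectors $v_i=U^*e_i\in\bbC^d$ and the rank-one positive matrices $A_i=v_iv_i^*\in M_d(\bbC)$. A one-line computation gives $\sum_{i=1}^m A_i = U^*\bigl(\sum_i e_ie_i^*\bigr)U = U^*U = I_d$, and $\|A_i\|=\|v_i\|^2=e_i^*Pe_i=(\diag P)_{ii}\le \|\diag P\|$. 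So the $A_i$ satisfy the hypotheses of Theorem~\ref{th:first theorem with partitions} with $C=\|\diag P\|$.

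Applying that theorem yields a partition $\{1,\dots,m\}=S_1\cup\cdots\cup S_r$ such that
\[
\Big\|\sum_{i\in S_j} A_i\Big\|\le \left(\frac{1}{\sqrt r}+\sqrt{\|\diag P\|}\right)^2
\]
for every $j$. Let $Q_j\in M_m(\bbC)$ be the diagonal orthogonal projection onto the coordinates indexed by $S_j$; clearly $\sum_j Q_j=I_m$. The definition of $A_i$ gives
\[
\sum_{i\in S_j} A_i \;=\; \sum_{i\in S_j} U^* e_i e_i^* U \;=\; U^*Q_j U.
\]

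The last step is to translate the bound on $\|U^*Q_jU\|$ into a bound on $\|Q_jPQ_j\|$. This is immediate from the identity $\|T^*T\|=\|TT^*\|$ for any rectangular matrix: applied to $T=Q_jU$ one obtains
\[
\|Q_jPQ_j\|=\|Q_jUU^*Q_j\|=\|(Q_jU)(Q_jU)^*\|=\|(Q_jU)^*(Q_jU)\|=\|U^*Q_jU\|,
\]
since $Q_j^2=Q_j$. Combining with the previous bound yields the desired inequality.

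There is no real obstacle here: the entire argument is a bookkeeping reduction to Theorem~\ref{th:first theorem with partitions}. The only thing to notice is that one should not try to decompose $P$ itself as $\sum Pe_i e_i^*P$ (which sums to $P$, not to an identity), but rather pass to the ``square root'' picture via the isometry $U$, where the rank-one pieces sum to $I_d$ and the norm of each piece is exactly the corresponding diagonal entry of $P$.
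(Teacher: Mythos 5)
Your proof is correct and follows essentially the same approach as the paper: you decompose $P$ into rank-one positive operators $A_i$ on the $d$-dimensional range, summing to $I_d$ with $\|A_i\|\le\|\diag P\|$, and then invoke Theorem~\ref{th:first theorem with partitions}. The only cosmetic difference is that you work with the explicit isometry $U$ and close the argument via $\|T^*T\|=\|TT^*\|$, whereas the paper phrases the same decomposition intrinsically on $V=\mathrm{ran}\,P$ and finishes by estimating $\|Q_j v\|^2$ for $v\in V$.
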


\begin{proof}
	Denote by $ V $ the image of $P$, and $ d=\dim V $. Let $(e_i)_{i=1}^m$ a basis in $\bbC^m$, and define on $V$  the rank one positive operators $ A_i$ by $A_i(v)=\<v,P(e_i)\> P(e_i) $.
	We have
	\begin{equation}\label{eq:paving proj - norm of A_i}
	\|A_i\|\le \|P(e_i)\|^2=\<  P(e_i), e_i\>\le \|\diag P\|,
	\end{equation}
	and, for $v\in V$,
	\begin{equation}\label{eq:<Av,v>}
	\<A_i v, v\>=\<v,P(e_i)\> \<P(e_i) , v\>=|\<v, Pe_i\>|^2=|\<v,e_i\>|^2.
	\end{equation}
Consequently,
	\[
	\<\sum_{i=1}^{m}A_i v, v\> = \sum_{i=1}^{m}\<A_iv,v\>
 = \sum_{i=1}^{m}|\< v, e_i\>|^2
	=\|v\|^2,
	\]
	whence
	\begin{equation}\label{eq:paving proj - sum of A_i}
	\sum_{i=1}^{r} A_i=I_V.
	\end{equation}
	
	From~\eqref{eq:paving proj - norm of A_i} and~\eqref{eq:paving proj - sum of A_i} it follows that $A_i$ satisfy the hypotheses of Theorem~\ref{th:first theorem with partitions}, with $C=\|\diag P\|_\infty$. There exists therefore a partition $S_1,\dots, S_r$ of $ \{1, \dots, m \} $, such that
	\[
	\Big\| \sum\limits_{i\in S_j} A_i\Big\| \le \left( \frac{1}{\sqrt{r}}+\sqrt{\|\diag P\|_\infty}\right)^2
	\]
	for any $j=1, \dots, r$.
	
	Define then $ Q_j\in M_m(\bbC) $ to be the diagonal orthogonal projection on the span of  $\{e_i  : i\in S_j\}$. Then 
	\[
	\|Q_jPQ_j\|=\|Q_jP(Q_jP)^*\|= \|Q_jP\|^2=\|Q|V\|^2.
	\]
	But, if $v\in V$, then, applying~\eqref{eq:<Av,v>},
	\[
	\|Q_jv\|^2=\sum_{i\in S_j} |\< v, e_i\>|^2=
 \sum_{i\in S_j} \<A_i v, v\> 
	=\<\big(\sum_{i\in S_j} A_i\big) v, v\>
	\le 
	\|\sum_{i\in S_j}A_i\|\cdot \|v\|^2.
	\]
	So
	\[
	\|Q_jPQ_j\|=\|Q|V\|^2\le \|\sum_{i\in S_j}A_i\| \le \left( \frac{1}{\sqrt{r}}+\sqrt{\|\diag P\|_\infty}\right)^2,
	\]
	and the lemma is proved.
\end{proof}

%
%
%

\begin{theorem}[The paving conjecture]\label{th:paving conjecture}
	For any $ \epsilon>0 $ there exists $ r\in \bbN $ such that, for any $m\in \bbN$ and $ T\in M_m(\bbC) $ with $\diag T=0$ there exist diagonal orthogonal projections $Q_1,\dots, Q_r\in M_m(\bbC)$, with  $\sum_{j=1}^{r}Q_j=I_m$, such that
	\[
	\|Q_jTQ_j\|\le \epsilon\|T\|
	\] 	
	for all $j=1,\dots, r$.
\end{theorem}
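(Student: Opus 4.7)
The plan is to reduce the theorem to Lemma~\ref{le:paving for projections} in two stages. First, I would pass from an arbitrary $T$ to self-adjoint matrices by writing $T=T_1+iT_2$ with $T_1=(T+T^*)/2$ and $T_2=(T-T^*)/(2i)$. Both $T_j$ are Hermitian, they inherit $\diag T_j=0$ from $\diag T=0$, and $\|T_j\|\le \|T\|$. If the paving conjecture is known for self-adjoint matrices with accuracy $\epsilon/2$ using some number $r'$ of parts, then applying it successively to $T_1$ and to $T_2$ and taking the common refinement of the two partitions (at most $r'^2$ parts) gives, for every part $R$,
\[
\|RTR\|\le \|RT_1R\|+\|RT_2R\|\le \tfrac{\epsilon}{2}\|T_1\|+\tfrac{\epsilon}{2}\|T_2\|\le \epsilon\|T\|,
\]
using that $R$ is dominated by one part of each of the two partitions.

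For the self-adjoint step, normalize $\|T\|\le 1$ and build the $2m\times 2m$ Halmos-type dilation
\[
P=\frac12\begin{pmatrix} I_m+T & \sqrt{I_m-T^2}\\ \sqrt{I_m-T^2} & I_m-T\end{pmatrix}.
\]
Since $T$ commutes with $\sqrt{I_m-T^2}$, a direct computation shows $P^2=P=P^*$, and $\diag T=0$ gives $\diag P=\tfrac12 I_{2m}$. Applying Lemma~\ref{le:paving for projections} to $P$ with parameter $r$ produces diagonal projections $\tilde Q_1,\dots,\tilde Q_r\in M_{2m}(\bbC)$ summing to $I_{2m}$ with $\|\tilde Q_jP\tilde Q_j\|\le c:=\bigl(\tfrac{1}{\sqrt r}+\tfrac{1}{\sqrt 2}\bigr)^2$. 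Decomposing $\tilde Q_j=Q_j^+\oplus Q_j^-$ along the $\bbC^m\oplus\bbC^m$ splitting and reading off the diagonal blocks (compressions do not increase norm), I get $\|Q_j^+(I_m+T)Q_j^+\|\le 2c$ and $\|Q_j^-(I_m-T)Q_j^-\|\le 2c$. Because $I\pm T\ge 0$, these are equivalent to
\[
\lambda_{\max}(Q_j^+\,T\,Q_j^+)\le 2c-1,\qquad \lambda_{\min}(Q_j^-\,T\,Q_j^-)\ge 1-2c.
\]

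To finish, I would merge these two families into a single partition of $\{1,\dots,m\}$ by common refinement: for each pair $(j,k)$, let $R_{j,k}$ be the diagonal projection in $M_m(\bbC)$ onto the intersection of the supports of $Q_j^+$ and $Q_k^-$. Since $R_{j,k}\le Q_j^+$ and $R_{j,k}\le Q_k^-$, the variational characterization of extreme eigenvalues passes both inequalities to the further compressions, yielding $1-2c\le \lambda_{\min}(R_{j,k}TR_{j,k})\le \lambda_{\max}(R_{j,k}TR_{j,k})\le 2c-1$. Hence $\|R_{j,k}TR_{j,k}\|\le 2c-1=\tfrac{2}{r}+2\sqrt{\tfrac{2}{r}}$, which tends to $0$ as $r\to\infty$. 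Choosing $r$ so that $2c-1\le\epsilon/2$ settles the self-adjoint case with at most $r^2$ parts, and the first paragraph then completes the proof with at most $r^4$ parts overall. The one subtle point, and the step that looks like the main obstacle at first sight, is that Lemma~\ref{le:paving for projections} applied to $P$ yields only $\|\tilde Q_jP\tilde Q_j\|\approx \tfrac12$, which sounds too weak to extract anything; what saves the argument is that this near-$\tfrac12$ bound, via the positivity of $I\pm T$, simultaneously forces an upper bound on $\lambda_{\max}$ of one compression of $T$ and a lower bound on $\lambda_{\min}$ of another, and the common refinement then amalgamates these one-sided controls into a genuine two-sided norm bound.
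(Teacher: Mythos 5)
Your argument is essentially the paper's own proof, step for step: the Halmos-type $2m\times 2m$ dilation $P$ with $\diag P=\tfrac12 I$, the application of Lemma~\ref{le:paving for projections} to $P$, the splitting of each $\tilde Q_j$ into blocks $Q_j^+$ and $Q_j^-$, the one-sided bounds on $Q_j^+(I+T)Q_j^+$ and $Q_j^-(I-T)Q_j^-$, the common refinement $R_{j,k}=Q_j^+Q_k^-$ to upgrade the two one-sided eigenvalue controls into a two-sided norm bound, and finally the Cartesian decomposition $T=T_1+iT_2$ with another common refinement. The reformulation via $\lambda_{\max}$ and $\lambda_{\min}$ is just a notational variant of the operator inequalities~\eqref{eq:paving s.a., first}--\eqref{eq:paving s.a., second} in the paper, and the part count $r^2$ (self-adjoint) and $r^4$ (general) agrees as well.
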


\begin{proof}
	Suppose first that $T=T^*$ and $\|T\|\le 1$. The $2m\times 2m$ matrix
	\[
	P=\begin{pmatrix}
	\frac{I_m+T}{2} & \frac{1}{2}(I_m-T^2)^{1/2}\\
	\frac{1}{2}(I_m-T^2)^{1/2} & \frac{I_m-T}{2}
	\end{pmatrix}
	\]
	 is an orthogonal projection and $\diag P=\left( \frac{1}{2}, \dots,  \frac{1}{2} \right)$.
	 Choose $r$ large enough to have $2 \left(\frac{1}{\sqrt{r}}+\frac{1}{\sqrt{2}}\right)^2-1\le \epsilon$.
	 It follows from Lemma~\ref{le:paving for projections} that there exist diagonal projections $Q_1'', \dots, Q_r''\in M_{2m}(\bbC)$ with 
	 $
	 \sum_{i=1}^{r} Q''_i=I_{2d}$ and $\|Q_i''PQ''_i\|\le\left(\frac{1}{\sqrt{r}}+\frac{1}{\sqrt{2}}\right)^2 $ for all $i=1, \dots, r$.
	 
	 Let $Q_i''=Q_i+Q'_i$ be the decomposition of $Q''_i$ in the diagonal projections corresponding to the first $m$ and the last $m$ vectors of the basis of $\bbC^{2m}$. So $\sum_{i=1}^{r} Q_i =\sum_{i=1}^{r} Q_i=I_m $ and, for each $i=1,\dots, m$,
	 \begin{equation}\label{eq:paving s.a., zero}
	 \|Q_i(I+T)Q_i\| \le 2 \left(\frac{1}{\sqrt{r}}+\frac{1}{\sqrt{2}}\right)^2, \qquad \|Q'_i(I-T)Q'_i\| \le 2 \left(\frac{1}{\sqrt{r}}+\frac{1}{\sqrt{2}}\right)^2.
	 \end{equation}
	 
	 The first inequality implies that $Q_i(I+T)Q_i \le 2 \left(\frac{1}{\sqrt{r}}+\frac{1}{\sqrt{2}}\right)^2Q_i$, so
	 \begin{equation}\label{eq:paving s.a., first}
	 -Q_i\le Q_iTQ_i \le \left[ 2 \left(\frac{1}{\sqrt{r}}+\frac{1}{\sqrt{2}}\right)^2-1 \right] Q_i\le \epsilon Q_i
	 \end{equation}
	(the left inequality being obvious). Similarly, the second inequality in~\eqref{eq:paving s.a., zero} yields
	\begin{equation}\label{eq:paving s.a., second}
	-\epsilon Q_i'\le \left[1- 2 \left(\frac{1}{\sqrt{r}}+\frac{1}{\sqrt{2}}\right)^2 \right] Q'_i\le Q'_iTQ'_i\le Q'_i.
	\end{equation}
	 If we define $Q_{ij}=Q_iQ'_j$ ($i,j=1,\dots, r$), then $\sum_{i,j=1}^{r} Q_{ij}=I_m$, and it follows from~\eqref{eq:paving s.a., first} and~\eqref{eq:paving s.a., second} that
	 \[
	 -\epsilon Q_{ij}\le Q_{ij}TQ_{ij}\le \epsilon Q_{ij},
	 \]
	 or $Q_{ij}TQ_{ij}\le \epsilon$. The theorem is thus proved for $T$ a selfadjoint contraction, and it is immediate to extend it to arbitrary selfadjoint matrices.

If we take now an arbitrary $T\in M_m(\bbC)$, with $\diag T=0$, we may write it as $T=A+iB$, with $A,B$ selfadjoint, $\|A\|, \|B\|\le \|T\|$, and $\diag A=\diag B=0$. Applying the first step, one finds diagonal projections $Q'_1,\dots, Q'_r, Q''_1,\dots, Q''_r\in M_m(\bbC)$, with $\sum_{i=1}^{r} Q'_i=\sum_{i=1}^{r} Q''_i=I_m$, $\|Q_i'AQ_i'\|\le \frac{\epsilon}{2}\|T\|$  and $\|Q_i''BQ_i''\|\le \frac{\epsilon}{2}\|T\|$ for $i=1,\dots, r$. If we define $Q_{ij}=Q_i'Q_j''$, then $\sum_{i,j=1}^{r} Q_{i,j}=I_m$, and 
$\|Q_{ij}TQ_{ij}\|\le \epsilon\|T\|$ for $i,j=1,\dots, r$.
\end{proof}

By writing carefully the estimates in the proof, one sees also that we may take $r$ of order $\epsilon^{-4}$.

\section{Final Remarks}\label{se:final remarks}

1. 
As noted in Remark~\ref{re:dirac}, there is a connection between the Kadison--Singer problem and quantum mechanics. We will give here a very perfunctory account. In the von Neumann picture of quantum mechanics, states (in the common sense) of a system correspond to states $\phi$ (in the $C^*$-algebra sense) of $B(\HH)$, while observables of the system correspond to selfadjoint operators $A\in B(\HH)$. The value of an observable in a state is precisely $\phi(A)$. 

A maximal abelian $C^*$-algebra $\AA\subset B(\HH)$ corresponds to a maximal set of mutually compatible observables. If the extension of any pure state on $\AA$ to a state on $B(\HH)$ is unique, then one can say that the given set of observables determines completely all other observables. This seems to have been assumed by Dirac implicitely.

Now, there are various  maximal abelian subalgebras of $B(\HH)$, but the problem can  essentially be reduced to two different basic types: continuous (that are essentially isomorphic to $L^\infty$ acting as multiplication operators on $L^2$) and discrete (that are isomorphic to $\DD$ acting in $\ell^2$). The main topic of the original paper~\cite{KS} is to prove that extension of pure states is \emph{not} unique in general for continuous subalgebras. They suspected that the same thing happens for the discrete case, but could not prove it, and so posed it as an open problem.

\smallskip
2.
We have said in the introduction that there are many statements that had been shown to be equivalent to (KS), besides (PC) that we have used in an essential way. We have thus, among others:

\begin{enumerate}
	\item Weaver's conjectures in discrepancy theory. The original proof in~\cite{MSS} goes actually through one of these; the shortcut using  (PC) is due to Tao~\cite{T}.
	
	\item Feichtinger's conjecture in frame theory.

	\item Bourgain--Tzafriri conjecture.
\end{enumerate}

All these conjectures have in fact different forms, weaker or stronger variants, etc---a detailed account may be found in~\cite{Ca}. It is worth noting that up to 2013 most specialists believed that they are not true, and that a counterexample will eventually be found. So it was a surprise when all these statements were simultaneously shown true by~\cite{MSS}.

\smallskip
3.
The method used in~\cite{MSS} is even stronger than described above. Actually, its first application was to a completely different problem in graph theory: the existence of certain infinite families of so-called Ramanujan graphs~\cite{MSS2} (see also~\cite{MSS3} for an account).

\smallskip
4.
The most tedious proof in the above notes is that of Lemma~\ref{le:basic conditions on Phi}. The original argument in~\cite{MSS} is more elegant, but uses another result of Borcea and Br\"and\'en~\cite{BB2} that represents real stable polynomials in two variables as determinants of certain matrices---a kind of converse to Lemma~\ref{le:example of real stable}. The direct argument we use appears in~\cite{T}.

\end{document}